\def\name{P\'eter Ivanics}
\newtheorem{prop}{Proposition}[section]
\newtheorem{lem}[prop]{Lemma}
\newtheorem{defn}[prop]{Definition}
\newtheorem{cor}[prop]{Corollary}
\newtheorem{thm}[prop]{Theorem}
\newtheorem{assn}[prop]{Assumption}
\newtheorem{rem}[prop]{Remark}
\newtheorem{fact}[prop]{Fact}	
\newtheorem{conj}[prop]{Conjecture}
\newsavebox\myboxA
\newsavebox\myboxB
\newlength\mylenA
\newcommand\xoverline[2][0.75]{%
    \sbox{\myboxA}{$\m@th#2$}%
    \setbox\myboxB\null
    \ht\myboxB=\ht\myboxA%
    \dp\myboxB=\dp\myboxA%
    \wd\myboxB=#1\wd\myboxA
    \sbox\myboxB{$\m@th\overline{\copy\myboxB}$}
    \setlength\mylenA{\the\wd\myboxA}
    \addtolength\mylenA{-\the\wd\myboxB}%
    \ifdim\wd\myboxB<\wd\myboxA%
       \rlap{\hskip 0.5\mylenA\usebox\myboxB}{\usebox\myboxA}%
    \else
        \hskip -0.5\mylenA\rlap{\usebox\myboxA}{\hskip 0.5\mylenA\usebox\myboxB}%
    \fi}
\newcommand{\kitem}[1][]{%
\item[\textbullet\, #1]\protected@edef\@currentlabel{#1}\ignorespaces%
}
\newcommand{\codim}{\mathop{\mathrm{codim}}\nolimits}				
\newcommand{\Sym}{\mathop{\mathrm{Sym}}}										
\newcommand{\CP}[1]{\mathbb{C}P^{#1}}    									  
\newcommand{\Mod}{{\mathcal M}}           									
\newcommand{\bigslant}[2]{{\raisebox{.2em}{$#1$}\left/\raisebox{-.2em}{$#2$}\right.}}	
\newcommand\comp[1]{\xoverline{#1}}													
\newcommand{\jj}{\bar{\jmath}}
\DeclareTextCommand{\textoneinferior}{PU}{\9040\201}				
\DeclareTextCommand{\texttwoinferior}{PU}{\9040\202}				
\DeclareTextCommand{\textzerosuperior}{PU}{\9040\160}				
\title[The locus of the representation of logarithmic connections]{The locus of the representation of logarithmic connections by Fuchsian equations}
\author{P\'eter Ivanics}
\begin{document}
\begin{abstract}
The generic element of the moduli space of logarithmic connections with parabolic points on holomorphic vector bundle over the Riemann sphere can be represented by a Fuchsian equation with some singularities and some apparent singularities. We analyze the case of rank $3$ vector bundle which leads to third order Fuchsian equation. We find coordinates on an open subset of the moduli space and we construct a non-trivial part of the moduli space by blowing up along a variety in a special case.
\end{abstract}
\maketitle

\section{Introduction}
\label{sec:intro}
Let $\Sigma$ be a compact Riemann surface with $n+1$ distinct fixed points $P=\left\{ t_0, \dots, t_n\right\}$. We will call these points \emph{parabolic points}. Let $I$ be the set of indices of $t_i \in P$. Fix a rank $m$ smooth vector bundle $\mathcal{V}$ over $\Sigma$ and consider a holomorphic vector bundle $E$ with underlying topological vector bundle $\mathcal{V}$. Furthermore consider the \emph{logarithmic connections} on $E$.

\begin{defn}
Let $\Sigma$ be a compact Riemann surface with a rank $m$ holomorphic vector bundle $E$. Let $U_{\Sigma}$ be an open chart in $\Sigma$, let $f:U_{\Sigma} \rightarrow \mathbb{C}$ be any holomorphic function and let $\sigma \in \Gamma(U_{\Sigma},E)$. A \emph{logarithmic connection} $D$ over $E$ with parabolic points $t_i \in P$ is a $\mathbb{C}$-linear map $ D: E \rightarrow E \otimes \Omega_{\Sigma}^1 (\log(P))$ where $\Omega_{\Sigma}^1 (\log(P))$ is the sheaf of meromorphic $1$-forms with at most first order poles at $P$ over $\Sigma$ which satisfies the Leibniz rule
\begin{equation*}
	D(\sigma f) = (D\sigma)f + \sigma \otimes \mathrm{d}f.
\end{equation*}
\end{defn}

We introduce a local chart $U$ in $\Sigma$ around a point $t\notin P$ and let $z$ be a coordinate on $U$.
A logarithmic connection on $U$ has the shape $D=\mathrm{d}+A(z) \mathrm{d}z$, where $A(z)$ is a matrix-valued holomorphic function. 

\noindent Let $U_i$ be a local chart in $\Sigma$ around a point $t_i\in P$ and let $z_i$ be a coordinate on $U_i$ such that $t_i=\{z_i=0\}$.
A logarithmic connection on $U_i$ has the shape $D=\mathrm{d}+\frac{\tilde{A}(z_i)}{z_i} \mathrm{d}z_i$, where $\tilde{A}(z_i)$ is a matrix-valued holomorphic function. We refer to the $\tilde{A}(t_i)$'s as residues and we fix their adjoint-orbits. We denote the eigenvalues of $\tilde{A}(t_i)$ by $\rho_{i,k}$, where $k=1, \dots, m$. The eigenvalues $\rho_{i,k}$ are called \emph{generic} if 
we choose az integer $1\leq l\leq m$ and any $K_i\subseteq \{1, \dots, m\}$ with $|K_i|=l$ $\forall i \in I$ then $\sum_{i \in I} \sum_{k\in K_i} \rho_{i,k} \notin \mathbb{Z}$.

\begin{defn}
A connection $D$ over $E$ is \emph{irreducible} if $E$ has no nontrivial $D$-invariant subbundles.
\end{defn}

Consider the pairs $(E,D)$ where $D$ is a logarithmic connection with fixed $\left\{\rho_{i,k}\right\}_{i=0,k=1}^{n,m}$. 
\begin{assn}
\label{ass:generic}
We suppose that the eigenvalues $\rho_{i,k}$ are generic.
\end{assn}
This implies by the residue theorem all connections are irreducible.

For the construction of moduli spaces one needs a technical condition called stability. It follows immediately from definitions that irreducible connections are stable. Hence under the Assumption~\ref{ass:generic} all pairs $(E,D)$ with the given eigenvalues $\rho_{i,k}$ are stable. Thus we refrain ourselves from spelling out the definition of stability. 

\begin{rem}
If we fix the eigenvalues $\rho_{i,k}$ then the rank and degree of underlying vector bundle $\mathcal{V}$ will be fixed by the residue theorem.
\end{rem}

Two stable connections $(E,D)$ and $(E',D')$ are equivalent if there exists a bundle isomorphism $\phi : E \rightarrow E'$ conjugating the connections. Denote by $\sim$ this equivalence relation on the set of stable connections.
Let $\Mod = \left\{ (E,D) \right\}\!/\!\sim$ be the \emph{moduli space of stable logarithmic connections} with fixed $\left\{\rho_{i,k}\right\}_{i=0,k=1}^{n,m}$.

If $\Sigma=\mathbb{CP}^1$, then the moduli space $\Mod$ is a $2N$-dimensional complex algebraic variety where
\begin{equation}
  \label{eq:dimension}
  N= \frac{1}{2}\dim_{\mathbb{C}} \Mod = \frac{m(m-1)}{2}(n-1)-m+1 \quad\mbox{(Remark~6.33. in \cite{put})} .
\end{equation}
Moreover, there exists a holomorphic symplectic structure on $\Mod$ \cite{inaba}.
The topology on the moduli space $\Mod$ refers to the Zariski topology.

It is known that the above connections can be represented by a \emph{Fuchsian equation} with singular locus in $P$ and with some \emph{apparent singularities} outside $P$ \cite{put}. 
\begin{defn}
A \emph{Fuchsian equation} is a linear homogeneous ordinary differential equation of order $m$ in a complex domain $U_{\Sigma}$. Its local form is the following
\begin{equation}
  \label{eq:gen_diffeq}
	w^{(m)}(z) + B_1(z) w^{(m-1)}(z) + \cdots + B_m(z) w(z) = 0,
\end{equation}
where the $B_k(z)$'s are functions with a pole of order $k$ at some points $u_l\in U_{\Sigma}$ ($k=1,\dots,m$, $l\in L$ with set of indices $L$).
\end{defn}

\begin{defn} 
\label{def:app_sing}
A point $q \in \CP1$ is an \emph{apparent singularity} of a differential equation \eqref{eq:gen_diffeq} if any of the (meromorphic) coefficient of Equation~\eqref{eq:gen_diffeq} has a pole but the fundamental system of solutions is analytic at this point.
\end{defn}

\noindent We will label parabolic points by $i$ and apparent singularities by $j$ throughout the paper. Moreover, we will label by $i$ or $j$ the expressions related to $t_i$ or $q_j$. Denote the set of apparent singularities by $Q$.

Dubrovin and Mazzocco dealt with Fuchsian equations of arbitrary order in \cite{dubrovin}. They introduced apparent singularities and some auxiliary parameters as Darboux coordinates on the symplectic space of Fuchsian system. 

Szab\'o specified the number of apparent singularities in \cite{szilard08} and he completely described  
the case of rank-$2$ vector bundle, which leads to a second order Fuchsian equation \cite{szilard13}. The present paper provides an alternative approach to determine Darboux coordinates. Szab\'o also describes the cases $q_{j_1}=q_{j_2}$ and $t_i=q_j$ for some indices by a blow up method.

The third order case is significantly more involved because the coefficient matrix of system of equations is not block diagonal, hence new techniques are needed for the analysis. 
The case of $4$ parabolic points (Theorem~\ref{th:main_V1}) is the smallest non-trivial example, where there exists a non-empty set over which the natural Darboux coordinates of the moduli space do not extend. This case differs from the previously studied cases (i.e. $q_{j_1}=q_{j_2}$ and $t_i=q_j$ cases) in \cite{szilard13}. Conjecture~\ref{cj:main_V2} describes how to construct a non-trivial part of the moduli space by blowing up and how to choose coordinates.

Iena and Leytem \cite{iena} point out a similar phenomenon in the Simpson moduli space of semi-stable sheaves. Namely, they blow up the moduli space along a singular subvariety and they get a closed subset of the moduli space of non-singular sheaves.

\subsection{The main results}
Before the main result we introduce some notations: let $\Delta$ be the subset where $q_{j_1}=q_{j_2}$ or $t_i=q_{j_3}$ for any $q_{j_1},q_{j_2},q_{j_3}\in Q$ and $t_i\in P$, let $N$ be the value in Equation~\eqref{eq:dimension} and let $\Sym(N)$ be the $N$-th symmetric group.

\begin{thm}
\label{th:main_V1}
Let $\Sigma$ be the Riemann sphere with $n$ parabolic points and let $E$ be a holomorphic vector bundle of rank $m=3$ with underlying topological vector bundle $\mathcal{V}$ over $\Sigma$.
Let $\Mod$ be the moduli space of logarithmic connections over $E$ with fixed generic eigenvalues $\left\{\rho_{i,k}\right\}_{i=0,k=1}^{n,3}$.
Then there exists a dense open subset $\Mod^0$ of $\Mod$ and a $\Sym(N)$-invariant affine subvariety $V$ in \mbox{$T^{*}(\mathbb{C} \setminus P)^N$} such that
\begin{equation*}
\Mod^0 = \bigslant{ T^{*} \left( \left( \mathbb{C} \setminus P \right)^N \setminus \Delta \right) \setminus V}{\Sym(N)}.
\end{equation*}
\end{thm}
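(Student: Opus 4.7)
The strategy is to produce Darboux coordinates on $\Mod$ by converting a logarithmic connection to a scalar Fuchsian equation via a cyclic vector, and encoding the resulting equation by the positions and leading Laurent data of its apparent singularities, following the philosophy of \cite{dubrovin} and \cite{szilard13}. Given a generic pair $(E,D)$ of rank $3$, I would choose a meromorphic cyclic vector $\sigma$ and express $D$ in the basis $\sigma, D\sigma, D^2\sigma$; this yields, after a gauge normalisation, a uniquely determined scalar Fuchsian equation of the form \eqref{eq:gen_diffeq} with $m=3$, singular on $P \cup Q$. The locus $\Mod^0 \subseteq \Mod$ where a cyclic vector exists is open and dense by a standard argument, and Assumption~\ref{ass:generic} guarantees that the construction is not obstructed by reducibility. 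The Fuchs relation, combined with the fixed exponents coming from the $\rho_{i,k}$ at the parabolic points and the standard apparent exponents $\{0,1,2\}$ at each $q_j$, forces $|Q|=N$ as in \eqref{eq:dimension}.

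At each apparent singularity $q_j$, the Laurent tails of $B_1,B_2,B_3$ contain one further free ``momentum'' parameter $p_j$, the remaining local Laurent coefficients being determined by the requirement that the local monodromy at $q_j$ is trivial (the no-log conditions). After an arbitrary ordering of the $q_j$, the assignment $(E,D)\mapsto(q_j,p_j)_{j=1}^N$ defines a morphism
\begin{equation*}
\Phi : \Mod^0 \longrightarrow T^{*}(\mathbb{C}\setminus P)^N / \Sym(N),
\end{equation*}
whose image avoids $\Delta/\Sym(N)$, since coincidences $q_{j_1}=q_{j_2}$ or $q_j=t_i$ break the normal form used to read off $(q_j,p_j)$. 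A direct calculation in the style of \cite{dubrovin} should exhibit $(q_j,p_j)$ as Darboux coordinates for the Inaba symplectic form \cite{inaba}, so that $\Phi$ is a symplectic immersion; injectivity then follows from the uniqueness of the normalised Fuchsian equation attached to $(E,D)$ together with the fact that the cyclic vector construction is invertible.

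The subvariety $V$ arises as the obstruction to surjectivity. To reconstruct a Fuchsian equation from an admissible tuple $(q_j,p_j)\notin\Delta$ one must solve a global linear system in the remaining unknown Laurent coefficients of $B_1,B_2,B_3$ that encodes the no-log conditions at every apparent singularity simultaneously. Unlike the rank $2$ case treated in \cite{szilard13}, this system is not block-diagonal in the $q_j$: because each $B_k$ is a rational function of bounded pole type, specified by only finitely many global parameters, the conditions at distinct apparent singularities are coupled. I would define $V$ as the vanishing locus of the determinant of this coupled system (a polynomial in the $(q_j,p_j)$ with coefficients built from the $\rho_{i,k}$), check $\Sym(N)$-invariance, and show that $V$ is empty for the degenerate low-$n$ cases but first becomes non-empty at $n=4$, matching the role of $V$ in Conjecture~\ref{cj:main_V2}.

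The main obstacle is therefore the explicit analysis of this coupled no-log system and the clean description of $V$: outside $V$ the system must have a unique solution, the recovered scalar equation must come via the inverse cyclic vector construction from a connection with the prescribed residue orbits, and over $V$ no such reconstruction can exist. Genericity of $\{\rho_{i,k}\}$ enters here to exclude resonance phenomena and parasitic reducibility in both directions, and the small non-triviality of the $n=4$ case is exactly the point where the codimension of $V$ drops enough to be detected by this determinantal obstruction.
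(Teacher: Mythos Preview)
Your overall strategy matches the paper's: represent $(E,D)$ by a third-order Fuchsian equation, take the apparent singularities $q_j$ together with one Laurent coefficient $p_j$ at each as Darboux coordinates, and define $V$ as the vanishing locus of the determinant of the linear system that reconstructs the equation from $(q_j,p_j)$. However, two concrete points need correction.

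First, the apparent exponents are not $\{0,1,2\}$: that triple has defect zero and forces $G_0^{q_j}=H_0^{q_j}=I_0^{q_j}=0$, so $q_j$ is not a singularity at all and the Fuchs relation count you invoke to get $|Q|=N$ fails. The paper takes the defect-one triple $\{0,1,3\}$ (the Assumption in Section~\ref{sec:app_sing}), which is the minimal choice that actually produces an apparent singularity. This changes the shape of the no-log conditions, which become the four relations \eqref{eq:ghi_123}; the momentum $p_j$ is then identified with $H_1^{q_j}$.

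Second, and more seriously, you have not argued that the determinant of the coupled linear system is not identically zero, i.e.\ that $V$ is a \emph{proper} subvariety. This is the entire technical content of the theorem, and it is not automatic: the system mixes the coefficients of $H$ and $I$ through the ``common rows'' \eqref{eq:hqj-iqj}, so one cannot simply appeal to a Vandermonde determinant. The paper's proof (Sections~\ref{sec:M1} and~\ref{sec:m=3}) splits the coefficient matrix along these common rows into a sum of $\binom{3n-5}{n-2}$ block-triangular matrices whose diagonal blocks are confluent Vandermonde, computes the $q_1$-degree of each summand, and shows that the coefficient of the top power $q_1^{23n-45}$ contains monomials $\prod_{j\in J} p_j$ with nonvanishing coefficients that cannot cancel. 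Without an argument of this kind you have defined $V$ but not excluded $V = T^*(\mathbb{C}\setminus P)^N$, so the statement that $\Mod^0$ is dense is unproved.

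A minor indexing point: in the paper there are $n+1$ parabolic points $t_0,\dots,t_n$, and $V$ is already nonempty at $n=3$ (four parabolic points), not $n=4$; for $n=2$ the determinant reduces to $-(t_1-t_2)^2$ and $V=\emptyset$.
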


\begin{cor}
We get a canonical coordinate system on $\Mod^0$ with coordinates $q_j$ on the base of $(\mathbb{C} \setminus P)^N / \Sym(N)$ and coordinates $p_j$ on the fiber over $q_j$.
\end{cor}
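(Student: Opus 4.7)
The plan is to read off the coordinates directly from the identification furnished by Theorem~\ref{th:main_V1}. Writing $W=(\mathbb{C}\setminus P)^N\setminus\Delta$, the theorem produces an isomorphism
\begin{equation*}
\Mod^0 \;\cong\; \bigl(T^{*}W \setminus V\bigr)/\Sym(N).
\end{equation*}
On the smooth affine variety $W$ the Euclidean coordinates $(q_1,\dots,q_N)$ are globally defined, and on $T^{*}W$ they extend to global Darboux coordinates $(q_1,\dots,q_N,p_1,\dots,p_N)$, where $p_j$ is the coefficient of $\mathrm{d}q_j$ in the Liouville $1$-form $\theta=\sum_j p_j\,\mathrm{d}q_j$. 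So on $T^{*}W$ the corollary is immediate.

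Next I would verify that the $\Sym(N)$-action on $W$ lifts to $T^{*}W$ by simultaneously permuting the pairs $(q_j,p_j)$; this is automatic since the action on $W$ permutes the coordinates and the induced action on cotangent vectors preserves the pairing with the $\mathrm{d}q_j$. The excluded subvariety $V$ being $\Sym(N)$-invariant by hypothesis, the quotient is well-defined and the collection $\{(q_j,p_j)\}_{j=1}^N$ descends to an unordered tuple. Away from the diagonal $\Delta$, which has been removed, one can order the pairs in any small neighborhood, yielding $(q_j,p_j)$ as honest local coordinates on $\Mod^0$ around every point.

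The geometric content is then interpreted: the $q_j$ are the positions of the apparent singularities of the Fuchsian equation representing the connection, and the $p_j$ are the conjugate auxiliary parameters in the sense of Dubrovin and Mazzocco \cite{dubrovin}. Compatibility with the holomorphic symplectic form on $\Mod$ of \cite{inaba} and the standard symplectic form $\omega=\sum_j \mathrm{d}p_j\wedge\mathrm{d}q_j$ on $T^{*}W$ is already built into the isomorphism supplied by Theorem~\ref{th:main_V1}, so no further checking is needed here.

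The only subtle point is the descent under $\Sym(N)$: strictly speaking, the individual functions $(q_j,p_j)$ are not globally defined on the quotient, only their elementary symmetric combinations are. I would address this by phrasing the corollary, as is standard for Hilbert-scheme-type quotients of $T^{*}\mathbb{C}$, as the existence of a canonical local coordinate system on $\Mod^0$; the absence of collisions on $W\setminus\Delta$ ensures that such local orderings of the $N$ pairs exist on any sufficiently small open set, and transition between two orderings is a permutation, which preserves the coordinate description.
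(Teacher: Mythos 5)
Your argument is correct and is essentially the intended one: the paper offers no separate proof of this corollary, treating it as an immediate consequence of the identification in Theorem~\ref{th:main_V1}, and your unpacking (standard Darboux coordinates on $T^{*}W$, the lift of the $\Sym(N)$-action to simultaneous permutation of the pairs $(q_j,p_j)$, and local ordering away from $\Delta$ to obtain honest local coordinates on the quotient) is exactly the reasoning the statement relies on. Your closing caveat that the individual $(q_j,p_j)$ are only locally defined on the quotient is a fair and accurate reading of what ``canonical coordinate system'' means here.
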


The following conjecture is an extension of the previous theorem.
\begin{conj}
\label{cj:main_V2}
Let $\Sigma$ be the Riemann sphere with $n$ parabolic points, fix generic eigenvalues $\left\{\rho_{i,k}\right\}_{i=0,k=1}^{n,3}$ and consider the moduli space of logarithmic connections $\Mod$ with eigenvalues $\rho_{i,k}$.
Let $V$ be the subvariety from Theorem~\ref{th:main_V1}. Then there exists a $\Sym(N)$ invariant affine subvariety $\hat{V}\neq \emptyset$ in $T^{*}(\mathbb{C} \setminus P)^N$, which intersects $V$ generically transversely and there exists an open subset of $\left( V \cap \hat{V} \right)$ (denoted by $(V \cap \hat{V})^0$) 
such that the following blow-up is a subset of the moduli space:
\begin{equation*}
	\Mod^1 := \mathrm{Blow}_{\left(V \cap \hat{V}\right)^0} \left(\bigslant{\left(T^{*} \left( \mathbb{C}\setminus P \right)^N \setminus \Delta \right)}{\Sym(N)}\right) \setminus \overset{\sim}{V} \subseteq \Mod,
\end{equation*}
where $\overset{\sim}{V}$ is the proper transform of $V$. (Obviously, $\Mod^0$ is a subset of $\Mod^1$.)
\end{conj}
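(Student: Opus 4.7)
The plan is to mimic the construction used for Theorem~\ref{th:main_V1} while replacing the gauge/cyclic-vector choice, which fails along $V$, by a second, complementary choice whose locus of failure will be $\hat{V}$. First I would re-examine the derivation of $V$: it arises because, at a connection parametrized by a point of $V$, a fixed section of $E$ ceases to be cyclic for $D$, so the third-order scalar Fuchsian equation used to parametrize $\Mod^0$ degenerates. Performing the same derivation with a different choice of cyclic vector (i.e.\ a different $\mathrm{GL}_3$-frame in the fibre of $E$ over a base point) yields a new rational map from $\Mod$ to $T^{*}(\mathbb{C}\setminus P)^N/\Sym(N)$ and hence a new exceptional locus; symmetrizing over $\Sym(N)$-orbits of apparent singularities gives a $\Sym(N)$-invariant candidate $\hat{V}$, and verifying non-emptiness reduces to checking that the two bad loci are not equal, which should follow from a direct comparison at a single smooth point of $V$.

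Next I would compare the two coordinate charts on the common open subset of $\Mod$ where both representations work, namely the image of $\Mod\setminus (V\cup\hat{V})$. The change of charts is a birational symplectic self-map of $T^{*}(\mathbb{C}\setminus P)^N/\Sym(N)$, and the essential computation is its local form near a generic smooth point of $V\cap\hat{V}$. The goal is to verify that the indeterminacy locus of this rational map is precisely $(V\cap\hat{V})^0$ and that pulling it back along the blow-up of this locus yields a regular morphism; this simultaneously establishes the generic transverse intersection of $V$ and $\hat{V}$ on a non-empty open subset and realizes the blow-up description. The embedding $\Mod^1\hookrightarrow\Mod$ is then obtained by combining Theorem~\ref{th:main_V1} on the open part $\Mod^0$ (which is untouched by the blow-up because $(V\cap\hat{V})^0\subseteq V$) with the alternative representation on the exceptional divisor minus $\overset{\sim}{V}$, where by construction the alternative cyclic vector is still cyclic and so recovers honest connections in $\Mod$. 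The containment $\Mod^0\subseteq\Mod^1$ is then immediate.

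The principal obstacle I expect is the middle step. Producing an \emph{explicit} second cyclic vector whose degeneration locus is genuinely transverse to $V$, and then showing that the symplectic transition has exactly the local form of a blow-up, is already delicate in the four-parabolic-point case, where $N=7$ and the phase space is $14$-dimensional; the absence of the block-diagonal structure available in rank $2$ (noted by the author in the introduction) forces a direct matrix computation rather than a reduction to previously studied transitions. A second subtlety, likely the reason the statement is only a conjecture, is ruling out accidental identifications on the exceptional divisor: distinct points of the exceptional fibre over a point of $(V\cap\hat{V})^0$ must parametrize non-equivalent connections. I would address this by matching tangent directions along the exceptional fibre with infinitesimal deformations of the corresponding connection transverse to the bad locus, but carrying this out rigorously seems to require substantial case analysis and is what I would expect to consume most of the work.
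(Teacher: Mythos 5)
Your plan diverges from the mechanism the paper actually uses, and the divergence hides the key idea. In the paper, $V$ is not the locus where a cyclic vector for $D$ degenerates; it is the locus in the \emph{coordinate space} $T^{*}(\mathbb{C}\setminus P)^N$ where the linear system $(T)$ that reconstructs the Fuchsian equation from the data $(q_j,p_j)$ has singular coefficient matrix $M_1$ (so $V=\{\sigma_1=\det M_1=0\}$). The variety $\hat{V}$ is then \emph{not} the bad locus of a second frame or second chart: it is the intersection $\bigcap_k V_k$ of the zero loci of the Cramer determinants $\sigma_k=\det M_k$ obtained by substituting the right-hand side vector $\mathbf{b}$ into the columns of $M_1$. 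Its meaning is that on $V\setminus\hat{V}$ the system $(T)$ is \emph{inconsistent} (no connection has those coordinates, which is why the proper transform $\overset{\sim}{V}$ must be deleted), while on $(V\cap\hat{V})^0$ (where a chosen minor shows the corank is exactly $1$) the system is \emph{under-determined}, so a genuine one-parameter family of inequivalent Fuchsian equations -- hence of connections -- shares the same coordinates $(q_j,p_j)$. The exceptional $\mathbb{CP}^1$ of the blow-up parametrizes exactly this family: the blow-up coordinate is $l_2/l_1=P_1/Q_1=\sigma_k/\sigma_1$, which off $V$ equals the coefficient $H_{k-2}$ (or $I_{k-15}$) by Cramer's rule and on the exceptional divisor becomes the free parameter of the under-determined system. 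Your proposal contains no source for this one-parameter family; a second cyclic vector would give you another birational chart and a transition map, but nothing in your plan explains why the fibre of $\Mod\to T^{*}(\mathbb{C}\setminus P)^N/\Sym(N)$ over a point of $(V\cap\hat{V})^0$ is one-dimensional, which is the whole content of the blow-up description.

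Two further points. First, your step ``verify that the indeterminacy locus of the transition map is precisely $(V\cap\hat{V})^0$'' is doing a great deal of unsupported work: with your definition of $\hat{V}$ there is no reason the indeterminacy locus of a chart transition should coincide with $V\cap\hat{V}$, and the paper never considers such a transition map at all. Second, be aware that the statement is a conjecture in the paper: Section~4 only establishes the picture for $n=3$ via explicit determinant factorizations ($\sigma_1=\chi_1\phi_1$), a minor computation bounding the corank, and a numerical example locating a point of $V_1\cap\hat{V}$ off $W$. So neither your outline nor the paper constitutes a proof; but the paper's supporting evidence is anchored in the linear-algebraic structure of $(T)$, which your outline would need to rediscover before any of its later steps could be carried out.
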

We will show a general numerical example in the case $n=3$ where this conjecture holds.
Another natural question is what is the situation with higher rank ($m>3$) vector bundle. The conjecture is that there exists two discriminant varieties $V$ and $\hat{V}$ such that a result similar to the above holds.
The computations in this article were made using \emph{Wolfram Mathematica~11.0}.

\subsection{Outline of the paper}
The next section contains the initial statement and equation which is the starting point of our analysis of moduli space. The section also describes some well-known tools which are useful for the computations. 
In Section~\ref{sec:third_order}, we write the equations explicitly in the case of rank-$3$ vector bundles and we collect the conditions which guarantee the appearance of parabolic points and apparent singularities. 
In Section~\ref{sec:n=3}, we analyze the case $n=3$. We construct the discriminant variety $V$ and make a numerical example. After that we show a blow up procedure in support of Conjecture~\ref{cj:main_V2}. 
In Section~\ref{sec:m=3}, we compute the variety $V$ in the case of arbitrary many parabolic points and we prove Theorem~\ref{th:main_V1}.

\bigskip
\noindent \textbf{Acknowledgments:} The author acknowledges support of NKFIH through the \'Elvonal (Frontier) program, with grant KKP126683.
The author wants to thank Szil\'ard Szab\'o who offered the research topic, gave some useful suggestions and paid attention to the writing of the paper. Thanks to Andr\'as Stipsicz for many useful comments and encouragement.

\section{Preparatory material}
\label{sec:prep}
The starting point of this paper is a theorem of Katz \cite{katz} extended by Dubrovin and Mazzocco in \cite{dubrovin}. Here we summarize and apply their results which are relevant to us.

Let $\Sigma$ be a compact Riemann surface with $t_0, \dots, t_n \in \Sigma$, let $E$ be a holomorphic bundle on $\Sigma$, let $U_{\Sigma}$ be an open chart in $\Sigma$ and let $D$ be a logarithmic connection over $E$ with logarithmic points $t_0, \dots, t_n$. 
Consider the sheaf $\mathcal{S}$ on $\Sigma \setminus \{t_0,\dots,t_n\}$ given by $L(U_{\Sigma}):=\{e\in \Gamma(U_{\Sigma},E) | D(e)=0\}$. 
This gives a functor $\Psi$ from the category of regular connections on $\Sigma$ to the category of local systems on $\Sigma$, i. e. the locally constant sheaves of $\mathbb{C}$-vector spaces.
	
Let $\tau: [0, 1] \rightarrow \Sigma$ be a path in $\Sigma$.
Let $L$ be a local system on $\Sigma$. Then $\tau^* L$ is a local system on $[0, 1]$. The triviality
of this local system yields an isomorphism $(\tau^* L)_0 \rightarrow (\tau^* L)_1$. The two stalks
 $(\tau^* L)_0$, $(\tau^* L)_1$ are canonically identified with $L_{\tau(0)}$ and $L_{\tau(1)}$. Thus we find
an isomorphism $L_{\tau(0)} \rightarrow L_{\tau(1)}$ induced by $\tau$. Let $b$ be a base point for $\Sigma$ and let $\pi_1$ denote the fundamental group of $\Sigma \setminus \{t_0,\dots,t_n\}$ with respect to this base point. Let $F$ denote the stalk $L_b$. Then for any closed path $\tau$ through $b$ we find an automorphism of $F$. In this way we have associated to $L$ a representation $\rho_L : \pi_1 \rightarrow \mathsf{GL}(F)$ of the fundamental group. More generally, we get a functor $\rho$ which associates a representation of the fundamental group of $\Sigma \setminus \{t_0,\dots,t_n\}$ to a local system on $\Sigma$.

\begin{defn}
The composition  
\begin{equation*}
	\rho \circ \Psi: \mathsf{Log.~conn.}(\Sigma, t_0,\dots,t_n) \longrightarrow \mathsf{Hom}\left(\pi_1\left(\Sigma \setminus \{t_0,\dots,t_n\}\right),\mathsf{GL}(F)\right)
\end{equation*}
gives a functor which associates a so-called \emph{monodromy representation} to a regular connection.\cite{put}
\end{defn}

\begin{defn}
Let a (Fuchsian) differential equation over an open, connected set $U$ in the complex plane. Let $\pi_1 (U)$ the fundamental group of $U$. The \emph{monodromy representation of the differential equation} is the linear representation of $\pi_1 (U)$. 
\end{defn}

\begin{defn}
We say that the logarithmic connection $D$ over $E$ is \emph{represented} by a Fuchsian equation if their monodromy representations are the same up to global conjugation.
\end{defn}

\begin{thm}
  \label{th:katz}
	Fix a Riemann surface $\Sigma$, a rank $m$ holomorphic vector bundle $E$ and set of points $P$ with fixed eigenvalues $\rho_{i,k}$ of residues of logarithmic connection at $t_i\in P$ ($i=0,\dots,n$, $k=1, \dots, m$). 
	Then every element of the above moduli space $\Mod$ can be represented by an order~$m$ Fuchsian equation~\eqref{eq:gen_diffeq} which has singularities at the points $t_i \in P$ with exponents $\rho_{i,k} \pmod{2\pi \mathbb{Z}}$ and has apparent singularities at points of some set $\left\{q_1, \dots, q_g \right\}=Q$ for some value~$g$.

Moreover, this correspondence gives Darboux coordinates $(q_j, p_j)_{j=1}^{g}$ on the moduli space $\Mod$ where $p_j$'s derive from the coefficients of the representing Fuchsian equation.
\end{thm}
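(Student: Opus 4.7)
The plan is to follow the classical cyclic vector construction going back to Katz, then extract the accessory parameters in the manner of Dubrovin--Mazzocco.

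\textbf{Step 1 (Fuchsian equation via a cyclic vector).} Given $(E,D) \in \Mod$, trivialize $E$ on a Zariski open set where $D = \mathrm{d} + A(z)\,\mathrm{d}z$ with $A$ having only logarithmic poles on $P$, and pick a meromorphic section $e$ of $E$ that is cyclic for $D$, i.e.\ such that $e,\, D_{\partial_z}e,\, \dots,\, D_{\partial_z}^{m-1}e$ generically span $E$. Existence of such an $e$ is classical (Deligne); the genericity hypothesis on $\rho_{i,k}$ allows one to arrange that no spurious poles are created at $P$. The defining relation
\begin{equation*}
D_{\partial_z}^{m} e + B_1(z)\, D_{\partial_z}^{m-1} e + \cdots + B_m(z)\, e = 0
\end{equation*}
produces the desired Fuchsian equation~\eqref{eq:gen_diffeq}, whose monodromy representation coincides tautologically with the monodromy of $D$ in the cyclic frame.

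\textbf{Step 2 (Singularity analysis and counting).} At each $t_i \in P$ one checks, using that $A$ has a simple pole, that $B_k$ has a pole of order at most $k$ at $t_i$ and that its indicial polynomial has roots $\rho_{i,k}$ modulo $\mathbb{Z}$. Off $P$ the coefficients $B_k$ acquire poles only where the wedge $e \wedge D_{\partial_z}e \wedge \cdots \wedge D_{\partial_z}^{m-1}e$ vanishes; at such a point $q_j$ the connection $D$ is still regular, so its fundamental system of flat sections remains holomorphic, and $q_j$ is an apparent singularity by Definition~\ref{def:app_sing}. Set $Q := \{q_1,\dots,q_g\}$. Fuchs' relation on $\CP1$, applied to~\eqref{eq:gen_diffeq} with singularity set $P \cup Q$ and with known exponent sums at $P$, pins down the cardinality of $Q$ as $g = \tfrac{m(m-1)}{2}(n-1) - m + 1 = N$.

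\textbf{Step 3 (Darboux coordinates).} At an apparent singularity, holomorphicity of the solution space forces the indicial exponents to be $0, 1, \dots, m-2, m$; this kills nearly all the freedom in the principal parts of the $B_k$ at $q_j$, leaving a single accessory parameter $p_j$ that can be read off from a specified subleading coefficient of the $B_k$'s Laurent expansion at $q_j$. The assignment $(E,D) \mapsto (q_j, p_j)_{j=1}^{N}$ then supplies $2N = \dim_{\mathbb{C}} \Mod$ parameters. That this assignment is symplectic, pulling back the holomorphic symplectic form of \cite{inaba} to $\sum_j \mathrm{d}p_j \wedge \mathrm{d}q_j$, is the content of the explicit residue calculation in \cite{dubrovin}.

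The principal obstacle is the delicate choice of cyclic vector: one must simultaneously avoid introducing extra poles at the $t_i$, keep the exponents there equal to $\rho_{i,k}$ rather than integer shifts, and ensure that the apparent singularities $q_j$ are pairwise distinct and disjoint from $P$. All three requirements fail on a proper subvariety of $\Mod$, and this is exactly the locus $\Delta \cup V$ excluded in Theorem~\ref{th:main_V1}, whose resolution motivates the blow-up construction of Conjecture~\ref{cj:main_V2}.
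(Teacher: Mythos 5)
First, a point of comparison: the paper does not prove Theorem~\ref{th:katz} at all. It is imported wholesale from Katz \cite{katz} and Dubrovin--Mazzocco \cite{dubrovin} (with the value of $g$ later specified by Szab\'o \cite{szilard08}), and the surrounding text explicitly says that only the existence of a Fuchsian representation and of Darboux coordinates is used downstream. So your proposal is not an alternative to an internal argument; it is a reconstruction of the external one. As such, Steps 1 and 3 are the standard cyclic-vector and accessory-parameter story and are consistent with what the cited sources do.

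The one step that does not hold up as written is the counting claim in Step 2. Fuchs' relation \eqref{eq:fuchs}, applied to the equation with singular set $P\cup Q$, constrains the sum of all exponents; given the exponent sums at $P$ it therefore determines the \emph{total defect} $\sum_j\delta_j=\sum_j\bigl(\sum_k\rho_{j,k}-\tfrac{m(m-1)}{2}\bigr)$ carried by the apparent singularities, not their number. You only get $g=N$ from \eqref{eq:dimension} after you know that each $q_j$ has defect $1$, which is a genericity statement rather than a consequence of the residue theorem; the actual determination of $g$ in \cite{szilard08} comes from computing the degree of the vanishing divisor of $e\wedge D_{\partial_z}e\wedge\cdots\wedge D_{\partial_z}^{m-1}e$, i.e.\ a degree computation on the bundle, and the paper is careful to state the theorem with an unspecified $g$ precisely because Katz does not fix it. Relatedly, your assertion in Step 3 that the exponents at an apparent singularity must be $0,1,\dots,m-2,m$ again presupposes defect $1$; in general they are only pairwise distinct non-negative integers (Proposition~\ref{prop:app_sing_exp}), and the paper imposes defect $1$ as a separate Assumption in Subsection~\ref{sec:app_sing}. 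Neither issue is fatal for the theorem as actually stated (which only claims ``some set $Q$'' and ``some value $g$''), but as a proof of the stronger count you wrote down, Step 2 has a genuine gap.
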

Katz does not specify the value of $g$, Dubrovin and Mazzocco specified $g=N$ from \ref{eq:dimension} but does not describe the variety when the Fuchsian representation is not canonical and depends on some choices. We only use the fact that there exists a Fuchsian representation and Darboux coordinates. We will analyze the moduli space with different techniques.

In the case $\Sigma=\CP1$ Szab\'o specified the value $g$ as the number $g=N=\frac{1}{2}\dim_{\mathbb{C}} \Mod$ in \cite{szilard08}. Moreover, he proved that the eigenvalues $\rho_{i,k}$ of residues and exponents $\rho_{i,k}$ of singularities are equal in generic case (i. e. when $q_{j_1} \neq q_{j_2}$).
In the case of second order Fuchsian equation \cite{szilard13} he assigned the auxiliary parameters $p_j$ ($j=1,\dots,N$) from the coefficient $B_2(z)$. We will follow this method in our article and it will turn out that the auxiliary parameters of the third order system also come from $B_2(z)$.

\subsection{The Frobenius method}
\label{sec:frobenius}
We will examine Equation~\eqref{eq:gen_diffeq} in the case $\Sigma=\CP1$ and $m=3$ with the Frobenius method. For this reason, we provide a short summary about this method. Write the coefficients of Equation~\eqref{eq:gen_diffeq} as quotients of two polynomials. The roots of the denominator are $t_i\in P$ and $q_j \in Q$ for all $i$ and $j$. Let $z$ be the standard coordinate on the chart of $\CP1$ away from $[1:0]$.
\begin{subequations}
  \label{eq:gen_ghipsi}
  \begin{align}
    B_1(z) & = \frac{G(z)}{\psi(z)},\; B_2(z) = \frac{H(z)}{\psi^2 (z)}, \dots, B_m(z) = \frac{K(z)}{\psi^m(z)}, \\
    \psi(z) & = \prod_{i=1}^n (z-t_i) \prod_{j=1}^N (z-q_j).
  \end{align}
\end{subequations}

\begin{defn}
\label{def:indicial_poly}
The \emph{indicial polynomial} of Equation~\eqref{eq:gen_diffeq} is the following:
\begin{equation}
  \label{eq:gen_indicial}
  \rho_i(\rho_i-1) \cdots (\rho_i-m+1) + G_0^{t_i} \rho_i(\rho_i-1) \cdots (\rho_i-m+2) + \dots + K_0^{t_i} = 0,
\end{equation}
where $G_0^{t_i}$, \dots, $K_0^{t_i}$ are the $-1\textsuperscript{st}$, $\dots$, $-m\textsuperscript{th}$ elements of the Laurent series of $B_1(z)$, \dots, $B_m(z)$ 
 at the point $t_i$. The roots $\rho_{i,1}$, \dots, $\rho_{i,m}$ of indicial polynomial are called \emph{exponents} which belong to singularity $t_i$.
\end{defn}

\begin{thm}
  \label{th:frobenius}
	Let the exponents $\rho_{i,k}$ ($k=1,\dots,m$) be generic (see Assumption~\ref{ass:generic}).
If in Equation~\eqref{eq:gen_diffeq} $G(z)$, $H(z)$,\dots, $K(z)$ are holomorphic functions near the neighborhood of $t_i$, then the solutions $w(z)$ have the following shape in an angular sector with vertex $t_i$:
\begin{equation*}
	w_{i,k}(z)=C_{i,k} (z) \cdot (z-t_i)^{\rho_{i,k}},
\end{equation*}
where $C_{i,k}(z)\neq 0$ are holomorphic functions $(k=1,\dots,m)$.
\end{thm}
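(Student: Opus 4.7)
The plan is to run the classical Frobenius method at the regular singular point $t_i$ and obtain one holomorphic series solution per root of the indicial polynomial. First I would localize by setting $\zeta = z - t_i$ and using the Euler operator $\theta = \zeta\,\mathrm{d}/\mathrm{d}\zeta$, which satisfies $\zeta^j (\mathrm{d}/\mathrm{d}\zeta)^j = \theta(\theta-1)\cdots(\theta-j+1)$. Multiplying Equation~\eqref{eq:gen_diffeq} by $\zeta^m$ rewrites it equivalently as
\begin{equation*}
\theta(\theta-1)\cdots(\theta-m+1)\,w + a_1(\zeta)\,\theta(\theta-1)\cdots(\theta-m+2)\,w + \cdots + a_m(\zeta)\,w = 0,
\end{equation*}
where $a_k(\zeta) := \zeta^{k} B_k(z)$ is holomorphic near $\zeta = 0$ by hypothesis (the numerator $G,H,\ldots,K$ is holomorphic and the denominator $\psi^k$ vanishes to order exactly $k$ at $t_i$), and $a_k(0)$ is precisely the Laurent coefficient appearing in Definition~\ref{def:indicial_poly}.

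Next, I would substitute the Frobenius ansatz $w = \sum_{n \geq 0} c_n \zeta^{\rho + n}$ with $c_0 = 1$, expand each $a_k(\zeta) = \sum_{s \geq 0} a_k^{(s)} \zeta^s$, and collect the coefficient of $\zeta^{\rho + n}$. Using $\theta(\zeta^{\rho+n}) = (\rho + n)\zeta^{\rho+n}$, this coefficient becomes
\begin{equation*}
P(\rho + n)\, c_n + \sum_{s=1}^{n} Q_s(\rho + n - s)\, c_{n-s} = 0,
\end{equation*}
where $P$ is exactly the indicial polynomial \eqref{eq:gen_indicial} and each $Q_s$ is a polynomial of degree $< m$ whose coefficients depend only on $a_k^{(s)}$. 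The equation at $n = 0$ forces $P(\rho) = 0$, i.e.\ $\rho \in \{\rho_{i,1},\ldots,\rho_{i,m}\}$; for $n \geq 1$ the recursion determines $c_n$ uniquely as soon as $P(\rho + n) \neq 0$.

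Under the generic hypothesis on the exponents, no two roots of $P$ differ by a positive integer, so for each choice $\rho = \rho_{i,k}$ we have $P(\rho_{i,k} + n) \neq 0$ for every $n \geq 1$, and the recursion yields a unique formal series. Convergence is the main technical step: I would run a standard majorant argument on a disk $|\zeta| \leq r$ where $|a_k^{(s)}| \leq M r^{-s}$, combined with the fact that $P$ is monic of degree $m$ so $|P(\rho_{i,k}+n)| \gtrsim n^{m}$ for large $n$; this produces a geometric bound $|c_n| \leq C \kappa^{n}$ and hence a positive radius of convergence for $C_{i,k}(z) := \sum_{n \geq 0} c_n (z - t_i)^n$.

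Finally, setting $w_{i,k}(z) = C_{i,k}(z)\,(z - t_i)^{\rho_{i,k}}$ on an angular sector with vertex $t_i$ (on which the multivalued factor is single-valued) gives a solution of the requested form, with $C_{i,k}$ holomorphic and $C_{i,k}(t_i) = c_0 = 1 \neq 0$, so $C_{i,k}$ is non-vanishing in a neighborhood of $t_i$. Varying $k = 1,\ldots,m$ produces $m$ solutions, which are linearly independent because their leading exponents $\rho_{i,k}$ are pairwise distinct. The main obstacle is the majorant estimate together with the bookkeeping needed to keep the growth of $1/|P(\rho_{i,k}+n)|$ under control; this is routine but the only nontrivial analytic input in the argument.
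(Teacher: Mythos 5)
Your proof is correct: it is the standard Frobenius--Fuchs argument (reduction to Euler-operator normal form, indicial polynomial \eqref{eq:gen_indicial}, the recursion $P(\rho+n)c_n=-\sum_s Q_s(\rho+n-s)c_{n-s}$, and a majorant estimate for convergence), and the genericity hypothesis enters exactly where it must, namely to guarantee $P(\rho_{i,k}+n)\neq 0$ for all $n\geq 1$ so that no logarithmic terms arise and the $m$ series are independent. The paper states Theorem~\ref{th:frobenius} without proof as part of its summary of the classical Frobenius method, so there is no argument in the text to compare against; the only caveat is that you (like the paper) read Assumption~\ref{ass:generic} as implying that exponents at a single $t_i$ never differ by an integer, which is the intended non-resonance condition even though the assumption is literally phrased in terms of sums of eigenvalues over all parabolic points.
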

Similar theorem concerns the apparent singularities $q_j$, except that the exponents belonging to $q_j$ must be natural numbers. Therefore the exponents belonging to $q_j$ are not generic in the sense than $\rho_{i,k}$'s and we need tools to handle them. The next facts concern the case $m=3$ but analogous statements hold for any order of differential equation at any (apparent) singularity.

\begin{fact}
\label{fact:logterms}
Let $\rho_{i,1}$, $\rho_{i,2}$ and $\rho_{i,3}$ be the exponents belonging to an (apparent) singularity $u_i$. Let indices $k_1, k_2,k_3 \in\{1,2,3\}$ and let $C_{k_j}$ ($j=1,2,3$) be non-zero holomorphic functions. The following hold about the solutions of~\eqref{eq:gen_diffeq}:
\begin{enumerate}
	
	\item If $\rho_{i, k_1} - \rho_{i, k_2} \notin \mathbb{Z}$ for all $k_1, k_2$ then there exists a fundamental system of solutions of the form 
	\begin{equation*}
	w_ {i, k} = z^{\rho_{i, k}} \cdot C_k(z),
	\end{equation*}
	where $C_k(z)\neq 0$ are holomorphic functions $(k=1,2,3)$.

	\item If  $\rho_{i, k_1} - \rho_{i, k_2}\in \mathbb{Z}$, $\rho_{i, k_1} - \rho_{i, k_3} \notin \mathbb{Z}$ and $\rho_{i, k_1} \leq \rho_{i, k_2}$ for any $k_1, k_2, k_3$ then there exists a fundamental system of solutions of the form 
	\begin{align*}
	w_{i, k_1} =& z^{\rho_{i, k_1}} \cdot C_{k_1}(z), \\
	w_{i, k_2} =& z^{\rho_{i, k_2}} \cdot C_{k_2}(z) + \kappa_{k_1,k_2}^i w_{i, k_1} \log(z-u_i), \\
	w_{i, k_3} =& z^{\rho_{i, k_3}} \cdot C_{k_3}(z),
	\end{align*}
	where $\kappa_{k_1,k_2}^i \in \mathbb{C}$ is constant. 
	\label{it:logterm1}
	
	\item If  $\rho_{i, k_1} - \rho_{i, k_2}\in \mathbb{Z}$, $\rho_{i, k_2} - \rho_{i, k_3} \in \mathbb{Z}$ and $\rho_{i,k_1}\leq \rho_{i,k_2}\leq \rho_{i,k_3}$ for any $k_1, k_2, k_3$ then there exists a fundamental system of solutions of the form 
	\begin{align*}
	w_{i, k_1} =& z^{\rho_{i, k_1}} \cdot C_{k_1}(z), \\
	w_{i, k_2} =& z^{\rho_{i, k_2}} \cdot C_{k_2}(z) + \kappa_{k_1,k_2}^i w_{i, k_1} \log(z-u_i), \\
	w_{i, k_3} =& z^{\rho_{i, k_3}} \cdot C_{k_3}(z) + \kappa_{k_1,k_3}^i w_{i, k_1} \log(z-u_i) + \\
							& + \kappa_{k_2,k_3}^i w_{i, k_2} \log(z-u_i),
	\end{align*}
	where $\kappa_{k_{j_1},k_{j_2}}^i \in \mathbb{C}$ ($j_1,j_2\in \{1,2,3\}$) are constant. \label{it:logterm2}
	
		\item If  $\rho_{i, k_1} = \rho_{i, k_2}$ for any $k_1, k_2 \in \{1,2,3\}$ then the fundamental system of solutions is the same as in the case $\rho_{i, k_1} - \rho_{i, k_2}\in \mathbb{Z}$ except that $\kappa_{k_1,k_2}^i \neq 0$. \label{it:nonzero_logterm}

\end{enumerate}
\end{fact}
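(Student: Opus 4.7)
The plan is to construct the stated fundamental systems of solutions by the Frobenius method, reducing each case to a recursion analysis for the ansatz coefficients. Near the (apparent) singularity $u_i$, multiply Equation~\eqref{eq:gen_diffeq} by the appropriate power of $(z-u_i)$ so that it takes the form $L(w)=0$ with $L$ a linear operator of order $m=3$ having holomorphic coefficients in a punctured neighbourhood; the indicial polynomial of this $L$ is the $P(\rho)$ of \eqref{eq:gen_indicial}, with roots $\rho_{i,1}, \rho_{i,2}, \rho_{i,3}$. Substituting the pure Frobenius ansatz $w = (z-u_i)^{\rho}\sum_{n\geq 0} a_n (z-u_i)^n$ with $a_0 \neq 0$ yields a recursion of the schematic form
\begin{equation*}
P(\rho+n)\, a_n \;=\; R_n(a_0,\dots,a_{n-1};\rho), \qquad n \geq 1,
\end{equation*}
where $R_n$ depends polynomially on the earlier coefficients and on the Taylor expansion of $L$ at $u_i$. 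Standard majorant estimates give convergence of any formal solution of this recursion on a punctured disk around $u_i$, so convergence can be treated once and for all and decoupled from the case analysis.

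For item~(1), no two roots of $P$ differ by a non-zero integer, hence $P(\rho_{i,k}+n)\neq 0$ for every $n\geq 1$ and every $k\in\{1,2,3\}$. Setting $a_0=1$ then solves the recursion uniquely, producing holomorphic $C_k$ with $C_k(u_i)=1$, and the three solutions are linearly independent because their leading exponents are pairwise distinct modulo $\mathbb{Z}$.

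For items~(2) and (3) the integer differences create resonances: $P(\rho+n)$ vanishes precisely when $\rho+n$ coincides with another root. At such a resonant $n$, the recursion is solvable only if $R_n$ also vanishes, which fails in general. The remedy is to add log terms, exploiting the identity $L\bigl(w_1 \log(z-u_i)\bigr) = \bigl(Lw_1\bigr)\log(z-u_i) + S[w_1]$ where $S[w_1]$ is a differential expression of order $\leq m-1$ arising solely from derivatives of $\log$. Hence adding $\kappa \cdot w_1 \log(z-u_i)$ to a trial solution, when $w_1$ itself solves $Lw_1=0$, only perturbs the recursion at orders strictly below the leading one, by terms linear in $\kappa$; one can thus choose $\kappa$ to clear exactly one obstruction. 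In case~(2) there is a single resonance linking $\rho_{i,k_1}$ and $\rho_{i,k_2}$, and the scalar $\kappa^i_{k_1,k_2}$ absorbs the corresponding $R_n$. In case~(3) there are up to three resonances among the integer-related roots, and one builds $w_{i,k_1}, w_{i,k_2}, w_{i,k_3}$ in succession, using each new $\kappa^i_{k_a,k_b}$ to clear the unique lowest-order outstanding obstruction.

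Item~(4) is the confluent limit $\rho_{i,k_1}=\rho_{i,k_2}$. Here the two pure-power ansätze coincide as formal series up to a scalar, so they span only a one-dimensional space of solutions. Since the third-order operator $L$ has a three-dimensional solution space and $w_{i,k_3}$ contributes only one extra dimension, a second independent solution distinguished from $w_{i,k_1}$ can only be obtained with a genuine log term, forcing $\kappa^i_{k_1,k_2}\neq 0$. The main obstacle in the whole argument is the bookkeeping in~(3): one must check that successive log coefficients cancel obstructions without re-introducing lower-order ones. This is handled cleanly by noting that the expression $S[w_1]$ above strictly lowers the $\log$-order, so an iterative construction in order of increasing exponent encounters each resonance exactly once and with a free scalar available to neutralise it.
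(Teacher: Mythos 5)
The paper offers no proof of this statement at all: it is labelled a \emph{Fact} and quoted as classical Frobenius--Fuchs theory for regular singular points, so there is no internal argument to compare yours against. Your recursion-based Frobenius approach is the standard route to these statements, and the overall architecture (indicial recursion $P(\rho+n)a_n=R_n$, resonances at integer root differences, log corrections via $L(w_1\log(z-u_i))=(Lw_1)\log(z-u_i)+S[w_1]$, majorants for convergence) is the right one.

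There is, however, one step that fails as written: the direction of the induction. The log-free Frobenius solution is guaranteed at the \emph{largest} exponent of each integer-congruence class of roots, since $P(\rho_{\max}+n)\neq 0$ for all $n\geq 1$; at a smaller root $\rho$ with $\rho+n_0$ again a root, the pure-power recursion meets the obstruction $R_{n_0}$ and generically has no solution. The construction must therefore run in \emph{decreasing} order of exponent: first the pure solution at the top root, then at each lower root add $\kappa\,w\log(z-u_i)$ only for already-built solutions $w$ of strictly larger exponent, one $\kappa$ per resonance. Your closing prescription of "an iterative construction in order of increasing exponent" stalls at the very first step, where the smallest-exponent recursion has up to two unresolved resonances and no log terms yet available to absorb them. (Note that the printed Fact shares this reversal: in items (2)--(3) the solution at the \emph{smaller} exponent $\rho_{i,k_1}$ is declared log-free while the larger-exponent solution carries $\log\cdot w_{i,k_1}$, whereas in the paper's own Equations~\eqref{eq:sol_total} it is $w_3$, the solution of largest exponent, that is pure.) Two smaller omissions: the claim that "one can choose $\kappa$ to clear exactly one obstruction" requires the coefficient of $\kappa$ at the resonant order to be nonzero --- a short computation identifies it as $P'(\rho')a_0$ at the target root $\rho'$, so simplicity of the roots is used there, and its vanishing at a double root is exactly what forces $\kappa^i_{k_1,k_2}\neq 0$ in item (4); and in item (4) your dimension count shows that a logarithm is \emph{necessary} but not that a single power of $\log$ \emph{suffices}, which still needs the explicit construction (e.g.\ differentiating the $\rho$-parametrized Frobenius series at the double root).
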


\begin{prop}
\label{prop:app_sing_exp}
At an apparent singularity $q_j$ the set of exponents $\left\{\rho_{j,k}\right\}_1^m$ are pairwise distinct non-negative integers.
\end{prop}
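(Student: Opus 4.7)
The plan is to exploit Definition~\ref{def:app_sing}: at an apparent singularity $q_j$, a fundamental system of solutions of Equation~\eqref{eq:gen_diffeq} is holomorphic (in particular, single-valued and without poles) in a neighborhood of $q_j$. I would first apply the indicial equation~\eqref{eq:gen_indicial} at $u_i = q_j$ to produce the $m$ exponents $\rho_{j,1}, \dots, \rho_{j,m}$, and then use Theorem~\ref{th:frobenius} together with Fact~\ref{fact:logterms} to record the possible local shapes of a fundamental system in terms of these exponents.

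Next, I would translate holomorphicity into separate constraints on the exponents and on the coefficients $\kappa^j_{k_1,k_2}$ that appear in Fact~\ref{fact:logterms}. Single-valuedness forces each contribution of the form $(z-q_j)^{\rho_{j,k}} C_k(z)$ with $C_k(q_j)\neq 0$ to be single-valued, which requires $\rho_{j,k}\in\mathbb{Z}$: any non-integer exponent would produce a branch cut, i.e.\ a non-trivial contribution to the local monodromy at $q_j$. Absence of poles then upgrades this to $\rho_{j,k}\in\mathbb{Z}_{\geq 0}$, since $C_k(z)$ does not vanish at $q_j$ and therefore cannot cancel a pole of $(z-q_j)^{\rho_{j,k}}$. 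Absence of $\log(z-q_j)$ factors in a holomorphic system then forces every $\kappa^j_{k_1,k_2}$ appearing in cases~(2) and (3) of Fact~\ref{fact:logterms} to vanish.

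It remains to rule out equal exponents. If $\rho_{j,k_1} = \rho_{j,k_2}$ for some $k_1 \neq k_2$, then Fact~\ref{fact:logterms}(\ref{it:nonzero_logterm}) asserts that $\kappa^j_{k_1,k_2}\neq 0$, so a genuine $\log(z-q_j)$ term must appear in $w_{j,k_2}$. This contradicts holomorphicity at $q_j$. Hence the exponents are pairwise distinct, and combined with the previous step they are pairwise distinct non-negative integers, as claimed.

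The delicate point I expect is justifying that Fact~\ref{fact:logterms} really exhausts all local shapes of solutions at $q_j$ up to change of basis, so that no unlisted configuration could somehow yield holomorphic solutions with a repeated exponent. This is handled by the standard theory of Fuchsian singularities: the Jordan form of the local monodromy dictates the form listed in Fact~\ref{fact:logterms}, and trivial local monodromy (the defining feature of an apparent singularity) forces precisely the diagonal situation with distinct non-negative integer exponents.
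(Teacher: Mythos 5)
Your proposal is correct, and it is essentially the argument the paper intends: the paper states Proposition~\ref{prop:app_sing_exp} without proof (marking it as standard with a \qed), and the justification it implicitly relies on is exactly yours — analyticity of the fundamental system forces trivial local monodromy, hence integer exponents and vanishing of all $\kappa^j_{k_1,k_2}$, absence of poles forces non-negativity, and Fact~\ref{fact:logterms}(\ref{it:nonzero_logterm}) rules out repeated exponents since equality would force a non-vanishing logarithmic term. Your closing remark correctly identifies the only point requiring the classical theory of regular singular points, namely that the local forms in Fact~\ref{fact:logterms} exhaust all possibilities.
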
 \qed

Since $\rho_{i,k}$ are fixed for every $t_i$ in the moduli space $\Mod$, the values of $G_0^{t_i}$, $H_0^{t_i}$,$\dots$ can be computed from the indicial polynomial. Moreover, $G_0^{q_j}$, $H_0^{q_j}$,$\dots$ will be computed from the indicial polynomial at $q_j$.

It is well known that above conditions are not linearly independent, the \emph{Fuchs' relation} holds for all exponents of all singularities due to residue theorem \cite{anosov}, namely:
\begin{equation}
  \label{eq:fuchs}
  \sum_{i=0, k=1}^{n,m} \rho_{i,k} = \frac{(n-2)m(m-1)}{2}.
\end{equation}

\subsection{Confluent Vandermonde matrices}
\label{sec:vandermonde}
A \emph{confluent Vandermonde matrix} is a certain generalization of Vandermonde matrix. Let $n$ be a natural number, let $X=\left\{x_1,x_2,\dots,x_m\right\}$ be a set of parameters and let $\lambda$ be a length $m$ integer partition of $n$ such that $\lambda=(n_1,n_2,\dots,n_m) \vdash  n$.
\begin{defn}
Define the $n \times n$ confluent Vandermonde matrix $M$ corresponding to $X$ and $\lambda$ in the following way.
For all indices $i\in\{1,\dots,m\}$ there is a row of $M$ which contains the consecutive powers of the parameter $x_i$ from exponent zero to $n-1$. We say this is a Vandermonde-like row:
\begin{equation*}
	1,x_i,x_i^2, x_i^3, \dots, x_i^{n-1}.
\end{equation*}
If any of $n_i>1$ then $M$ contains rows which are $1\textsuperscript{st}$, $2\textsuperscript{nd}$, $\dots$, $(n_i-1)\textsuperscript{th}$ derivative elementwise of above Vandermonde-like row with respect to $x_i$ and divided by $1!$, $2!$, $\dots$, $(n_i-1)!$ respectively. 
\end{defn}
\begin{defn}
\label{def:st_seq}

In this article we require that the rows follow each other by the index of parameters, within that by order of derivatives. (But we do not require that if $i<j$ then $n_i\geq n_j$.)  We will refer to this order as \emph{standard sequence}.
\end{defn}

\begin{prop} \cite{horn}
\label{prop:conf_vand}
Let $M$ be the $n \times n$ confluent Vandermonde matrix with the set of parameters $X$ (with cardinality $m$) and $\lambda \vdash n$.
Let $n_i$ be the cardinality of rows which contain parameter $x_i$ ($n_i-1$ is the maximum order of derivatives with respect to $x_i$'s). The determinant of such a confluent Vandermonde matrix is the following:
\begin{equation}
  \label{eq:conf_vand}
  \det(M)= \prod_{1\leq i<j \leq m} (x_j-x_i)^{n_i \cdot n_j}=:\mathrm{ConfVand} \left( \left\{ x_i^{(n_i-1)} \right\}_{i=1}^m \right),
\end{equation} 
where we introduce a function $\mathrm{ConfVand}$ which gives the determinant of a confluent Vandermonde matrix with parameters $x_i$ and order of derivatives $n_i-1$ \cite{ha-gibson}.
\end{prop}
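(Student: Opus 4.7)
The plan is to deduce the determinant formula by a confluent limit argument from the classical Vandermonde identity. For each $i \in \{1,\dots,m\}$ I would replace $x_i$ by $n_i$ distinct nearby points $x_{i,k}:=x_i+k\epsilon$ ($k=0,\dots,n_i-1$), so that after arranging the rows in the standard sequence we obtain a classical $n\times n$ Vandermonde matrix $V(\epsilon)$ whose determinant equals $\prod_{(i,k)<(j,l)}(x_{j,l}-x_{i,k})$. Splitting this product into intra-group pairs (same $i$) and inter-group pairs, an elementary computation gives
\[
\det V(\epsilon) = \Bigl(\prod_{i=1}^m \epsilon^{n_i(n_i-1)/2}\prod_{k=0}^{n_i-1} k!\Bigr)\cdot \prod_{1\le i<j\le m}(x_j-x_i)^{n_i n_j}\cdot \bigl(1+O(\epsilon)\bigr).
\]

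Next, within each block of $n_i$ consecutive rows I would perform the upper-triangular row operations that replace row $k$ by the scaled $k$-th forward difference $\frac{1}{k!\epsilon^k}\sum_{l=0}^k\binom{k}{l}(-1)^{k-l}(\text{row }l\text{ of block }i)$. These are divided differences at the nodes $x_i,x_i+\epsilon,\dots,x_i+k\epsilon$, and they are well-defined for every $\epsilon>0$. The transformation is triangular with diagonal entries $\frac{1}{k!\epsilon^k}$, so the transformed matrix $M(\epsilon)$ satisfies
\[
\det M(\epsilon)=\frac{\det V(\epsilon)}{\prod_{i=1}^m\epsilon^{n_i(n_i-1)/2}\prod_{k=0}^{n_i-1}k!}=\prod_{1\le i<j\le m}(x_j-x_i)^{n_i n_j}\cdot\bigl(1+O(\epsilon)\bigr),
\]
the singular factors having cancelled exactly.

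Finally, the $k$-th divided difference of a polynomial at $x_i,x_i+\epsilon,\dots,x_i+k\epsilon$ tends as $\epsilon\to 0$ to $\frac{1}{k!}$ times its $k$-th derivative at $x_i$. Hence the entries of $M(\epsilon)$ converge entrywise to the entries of $M$, and since the determinant is polynomial in the entries, $\det M=\lim_{\epsilon\to 0}\det M(\epsilon)=\prod_{1\le i<j\le m}(x_j-x_i)^{n_i n_j}$, as claimed.

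The main obstacle is bookkeeping: one must verify that the $\epsilon$-powers and the factorial factors coming from the intra-group Vandermonde pairs cancel exactly against those produced by the triangular row operations. This is routine once tabulated, but easy to get wrong by a factorial. A conceptually cleaner alternative is a vanishing argument---show that $\det M$, as a polynomial in $x_1,\dots,x_m$, is divisible by $(x_j-x_i)^{n_i n_j}$ by exhibiting collapsed rows on the diagonals $x_i=x_j$, compare total degrees, and match the leading coefficient via a single monomial inspection---but making this rigorous demands essentially the same bookkeeping in disguise, so the limit approach above is preferable.
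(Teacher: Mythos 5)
Your argument is correct and complete. Note, however, that the paper does not prove this proposition at all: it is quoted from the literature (the citations to Horn--Johnson and to Ha--Gibson), so there is no internal proof to compare against. Your confluent-limit derivation is one of the two standard proofs of this identity (the other being the divisibility-plus-degree-count argument you mention at the end), and the bookkeeping checks out: the intra-group factors of the perturbed Vandermonde contribute $\prod_i \epsilon^{n_i(n_i-1)/2}\prod_{k=0}^{n_i-1}k!$, which is exactly the reciprocal of the product of the diagonal entries $1/(k!\,\epsilon^k)$ of your divided-difference transformation, so the limit is finite and equals the inter-group product $\prod_{i<j}(x_j-x_i)^{n_i n_j}$. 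Two small points of hygiene: the row transformation you describe is unit \emph{lower}-triangular within each block after scaling (row $k$ uses only rows $0,\dots,k$), not upper-triangular, though this does not affect the determinant computation; and it is worth remarking explicitly that listing the perturbed nodes in the paper's ``standard sequence'' is what guarantees the inter-group factors appear as $(x_{j,l}-x_{i,k})$ with $i<j$, so that the signless form of \eqref{eq:conf_vand} comes out without any extra parity argument. Your limit also lands precisely on the paper's normalization, since the $k$-th divided difference converges to the $k$-th derivative divided by $k!$, matching the $1/k!$ scaling in the paper's definition of the derivative rows.
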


\section{The third order system}
\label{sec:third_order}
First of all, fix the notation in the case of rank-$3$ vector bundles (i. e. $m=3$). The Riemann sphere $\CP1$ has $n+1$ fixed points $t_i$ (the parabolic points) with fixed exponents $\rho_{i,k}$ ($i=0,\dots n$, $k=1,2,3$) and $N=3n-5$ points (the apparent singularities of Fuchsian equation) according to Equation~\eqref{eq:dimension}. They are:
$$
 P=\left\{ t_0, \dots, t_n\right\}, \hspace{10mm} Q=\left\{ q_1, \dots, q_{3n-5}\right\},
$$
For simplicity, choose $t_0$ at infinity.

Now, the differential equation~\eqref{eq:gen_diffeq} has the following shape:
\begin{equation*}
  w'''(z) + B_1(z) w''(z) + B_2(z) w'(z) + B_3(z)  w(z) = 0.
\end{equation*}
Write the coefficients $B_i(z)$ as a quotient of two polynomials as in Equations~\eqref{eq:gen_ghipsi}.
The denominators of these fractions are powers of the following polynomial:
\begin{equation}
	\psi(z) = \prod_{i=1}^n (z-t_i) \prod_{j=1}^{3n-5} (z-q_j).
	\label{eq:psi}
\end{equation}
The polynomials in the numerators are:
\begin{subequations}
\label{eq:poly}
\begin{align}
\label{eq:poly_gz}
  B_1 (z) \psi(z) = G(z) & = G_0 + \dots+ G_{4n-6} z^{4n-6}, \\
  B_2 (z) {\psi(z)}^2 = H(z) & = H_0 + \dots+ H_{8n-12} z^{8n-12}, \\
  B_3 (z) {\psi(z)}^3 = I(z) & = I_0 + \dots+ I_{12n-18} z^{12n-18}.
\end{align}
\end{subequations}
Namely, the differential equation takes the following shape:
\begin{equation}
  \label{eq:diffeq}
  w'''(z) + \frac{G(z)}{\psi(z)} w''(z) + \frac{H(z)}{\psi^2(z)} w'(z) + \frac{I(z)}{\psi^3(z)} w(z) = 0.
\end{equation}
Equation~\eqref{eq:diffeq} contains $24n-33$ unknown coefficients. 

\subsubsection{Conditions from parabolic points}
Frobenius method provides a solution of Equation~\eqref{eq:diffeq} near $z=t_i$ as the form $w(z) = \sum_{l=0}^{\infty} a_l (z-t_i)^{l+\rho_i}$ and get the \emph{indicial equation} ($i=0, \dots, n$):
\begin{equation}
	\label{eq:indicial}
  \rho_i(\rho_i-1)(\rho_i-2) + G_0^{t_i} \rho_i(\rho_i-1) + H_0^{t_i} \rho_i + I_0^{t_i} = 0,
\end{equation}
where $G_0^{t_i}$, $H_0^{t_i}$, $I_0^{t_i}$ denote the $-1\textsuperscript{st}$, $-2\textsuperscript{nd}$ and $-3\textsuperscript{rd}$ $z$ coefficients of Laurent series of the coefficient of Equation~\eqref{eq:diffeq} at $t_i$ ($i=1, \dots, n$).
\begin{subequations}
  \label{eq:ghi_ti0}
  \begin{align}
    G_0^{t_i} & = \lim_{z \rightarrow t_i} (z-t_i)\frac{G(z)}{\psi (z)} = \frac{G_0+G_1 t_i+\dots+G_{4n-6}t_i^{4n-6}}{\prod_{k=1, k\neq i}^n (t_i-t_k) \prod_{l=1}^{3n-5} (t_i-q_l)}, \\
    H_0^{t_i} & = \lim_{z \rightarrow t_i} (z-t_i)^2 \frac{H(z)}{\psi^2 (z)} = \frac{H_0+H_1 t_i+\dots+H_{8n-12}t_i^{8n-12}}{\prod_{k=1, k\neq i}^n (t_i-t_k)^2 \prod_{l=1}^{3n-5} (t_i-q_l)^2}, \\
    I_0^{t_i} & = \lim_{z \rightarrow t_i} (z-t_i)^3 \frac{I(z)}{\psi^3 (z)} = \frac{I_0+I_1 t_i+\dots+I_{12n-18}t_i^{12n-18}}{\prod_{k=1, k\neq i}^n (t_i-t_k)^3 \prod_{l=1}^{3n-5} (t_i-q_l)^3}.
  \end{align}
\end{subequations}
Similar limits at $t_0=\infty$ are given by
\begin{subequations}
\label{eq:ghi_t00}
  \begin{align}
    G_0^{t_0} & = G_{4n-6}, \\
    H_0^{t_0} & = H_{8n-12}, \\
    I_0^{t_0} & = I_{12n-18}.
  \end{align}
\end{subequations}

The indicial equation~\eqref{eq:indicial} reads as
$$
  \rho_i^3 + (G_0^{t_i}-3) \rho_i^2 + (H_0^{t_i}-G_0^{t_i}+2)\rho_i + I_0^{t_i} = 0.
$$
Denote the (fixed) roots by $\rho_{i,k}$ ($k=1,2,3$). Vieta's formulas say:
\begin{subequations}
  \label{eq:vieta}
  \begin{align}
    \rho_{i,1} + \rho_{i,2} + \rho_{i,3} & = 3-G_0^{t_i}, \\
    \rho_{i,1} \rho_{i,2} + \rho_{i,1} \rho_{i,3} + \rho_{i,2} \rho_{i,3} & = H_0^{t_i} - G_0^{t_i} + 2, \\
    \rho_{i,1} \rho_{i,2} \rho_{i,3} & = -I_0^{t_i}.
  \end{align}
\end{subequations}

Express $G_0^{t_i}$, $H_0^{t_i}$, $I_0^{t_i}$ and substitute to Equations~\eqref{eq:ghi_ti0} and \eqref{eq:ghi_t00}. 
We get the following system of linear equations in coefficients of $G$, $H$ and $I$ as variables ($i=1,\dots, n$):

{\allowdisplaybreaks
\begin{subequations}
  \begin{align}
    \label{eq:gt0}
    G_{4n-6} =& 3-(\rho_{0,1} + \rho_{0,2} + \rho_{0,3}), \\
    \label{eq:gti}
    \sum_{k=0}^{4n-6} t_i^k G_k =& \left[ 3-(\rho_{i,1} + \rho_{i,2} + \rho_{i,3}) \right] \prod_{k=1, k\neq i}^n (t_i-t_k) \prod_{l=1}^{3n-5} (t_i-q_l), \\
    \label{eq:ht0}
    H_{8n-12} =& \rho_{0,1} \rho_{0,2} + \rho_{0,1} \rho_{0,3} + \rho_{0,2} \rho_{0,3} -(\rho_{0,1} + \rho_{0,2} + \rho_{0,3}) +1, \\
		\label{eq:hti}
		\begin{split}
				\sum_{k=0}^{8n-12} t_i^k H_k =& \left[ \rho_{i,1} \rho_{i,2} + \rho_{i,1} \rho_{i,3} + 
						\rho_{i,2} \rho_{i,3} -(\rho_{i,1} + \rho_{i,2} + \rho_{i,3}) +1 \right] \cdot \\
				& \cdot \prod_{k=1, k\neq i}^n (t_i-t_k)^2 \prod_{l=1}^{3n-5} (t_i-q_l)^2,
		\end{split} \\
    \label{eq:it0}
    I_{12n-18} =& -\rho_{0,1} \rho_{0,2} \rho_{0,3}, \\
    \label{eq:iti}
    \sum_{k=0}^{12n-18} t_i^k I_k =& \left( -\rho_{i,1} \rho_{i,2} \rho_{i,3} \right) \prod_{k=1, k\neq i}^n (t_i-t_k)^3 \prod_{l=1}^{3n-5} (t_i-q_l)^3.
  \end{align}
\end{subequations}}

\subsection{Conditions from apparent singularities}
\label{sec:app_sing}
We treat the case of apparent singularities similarly. Compute $G_0^{q_j}$, $H_0^{q_j}$, $I_0^{q_j}$ for all $q_j$ ($j=1, \dots, 3n-5$):
  \begin{align*}
    G_0^{q_j} & = \lim_{z \rightarrow q_j} (z-q_j)\frac{G(z)}{\psi (z)} = \frac{G_0+G_1 q_j+\dots+G_{4n-6}q_j^{4n-6}}{\prod_{k=1}^n (q_j-t_k) \prod_{l=1, l\neq j}^{3n-5} (q_j-q_l)}, \\
    H_0^{q_j} & = \lim_{z \rightarrow q_j} (z-q_j)^2 \frac{H(z)}{\psi^2 (z)} = \frac{H_0+H_1 q_j+\dots+H_{8n-12}q_j^{8n-12}}{\prod_{k=1}^n (q_j-t_k)^2 \prod_{l=1, l\neq j}^{3n-5} (q_j-q_l)^2}, \\
    I_0^{q_j} & = \lim_{z \rightarrow q_j} (z-q_j)^3 \frac{I(z)}{\psi^3 (z)} = \frac{I_0+I_1 q_j+\dots+I_{12n-18}q_j^{12n-18}}{\prod_{k=1}^n (q_j-t_k)^3 \prod_{l=1, l\neq j}^{3n-5} (q_j-q_l)^3}.
  \end{align*}
Otherwise, we get a system of linear equations from indicial equations and Vieta's formulas, similarly as in Equation~\eqref{eq:vieta} (switch index $i$ to $j$ and $t_i$ to $q_j$). 

The exponents of apparent singularities are not fixed, but they are natural numbers since Definition~\ref{def:app_sing}. Hence case of (\ref{it:logterm2}) or case of (\ref{it:nonzero_logterm}) in Fact~\ref{fact:logterms} satisfied. We will can choose the values of $\rho_{j,k}$ ($j=1, \dots, 3n-5$; $k=1,2,3$).

The roots of the indicial polynomial cannot be equal because it leads to a nonzero logarithmic term according to the case of~(\ref{it:nonzero_logterm}) in Fact~\ref{fact:logterms}. 

\begin{defn}
The \emph{defect} of an apparent singularity $q_j$ is
\begin{equation}
	\delta:=\sum_{k=1}^{m} \rho_{j,k} - \frac{m (m-1)}{2}.
\end{equation}
\end{defn}
The defect measures the complexity of an apparent singularity $q_j$. The defect vanishes at a smooth point.

Indeed the set of exponents with defect zero is $\rho_{j,1} = 0$, $\rho_{j,2} = 1$ and $\rho_{j,3} = 2$ for all $j$. But then $G_0^{q_j}=0$, $H_0^{q_j}=0$ and $I_0^{q_j}=0$, hence the differential equation has no apparent singularity at $q_j$. Thus we exclude this possibility.

In view of Proposition~\ref{prop:app_sing_exp} the next simplest possibility is $\delta=1$.
\begin{assn}
The defect at all apparent singularities $q_j$ is equal to $1$ which means:
\begin{equation}
	\rho_{j,1} = 0, \rho_{j,2} = 1, \rho_{j,3} = 3 \mbox{~for all~} j.
\label{eq:app_sing_rho}
\end{equation}
\end{assn}

Then $G_0^{q_j}=-1$, $H_0^{q_j}=0$ and $I_0^{q_j}=0$ and we get the following system of linear equations for the coefficients ($j=1,\dots, 3n-5$):
\begin{subequations}
  \begin{align}
    \label{eq:gqj}
    \sum_{k=0}^{4n-6} q_j^k G_k & = -\prod_{k=1}^n (q_j-t_k) \prod_{l=1, l\neq j}^{3n-5} (q_j-q_l), \\
    \label{eq:hqj}
    \sum_{k=0}^{8n-12} q_j^k H_k & = 0, \\
    \label{eq:iqj}
    \sum_{k=0}^{12n-18} q_j^k I_k & = 0.
  \end{align}
\end{subequations}

\subsubsection{Conditions for the vanishing of logarithmic terms}
The above chosen $\rho_{j,k}$ exponents may cause the appearance of logarithmic terms with coefficients $\kappa_{k_1,k_2}^j$ in the solution of the differential equation in~\eqref{eq:diffeq} according to the case~(\ref{it:logterm2}) in Fact~\ref{fact:logterms}. The indices $(k_1,k_2)$ refer to the exponents $\rho_{j,k_1}$ and $\rho_{j,k_2}$.

For the existence of apparent singularities we need to exclude logarithmic terms; therefore, we must construct conditions for the vanishing of these terms. Note that the method in [Lemma 2.]\cite{szilard13} does not work here.

A fundamental system of solutions of Equation~\eqref{eq:diffeq} near $q_j$ based on the Frobenius method with the above values of exponents are the following:
\begin{subequations}
\label{eq:sol_total}
\begin{align}
		\begin{split}
    w_1(z-q_j) =& a_0 +a_1(z-q_j)+a_2(z-q_j)^2+\dots+ \\
				&+\kappa_{1,2}^j \log(z-q_j) \cdot w_2(z-q_j) + \kappa_{1,3}^j \log(z-q_j) \cdot w_3(z-q_j), 
		\end{split} \\
    w_2(z-q_j) =& b_1(z-q_j)+b_2(z-q_j)^2+\dots+ \kappa_{2,3}^j \log(z-q_j) \cdot w_3(z-q_j), \\
    w_3(z-q_j) =& c_3(z-q_j)^3+c_4(z-q_j)^4+\dots,
\end{align}
\end{subequations}
where $a_0\neq 0$, $b_1\neq 0$ and $c_3\neq 0$.

\begin{lem}
 The vanishing of logarithmic terms is equivalent to the following equations.
\begin{subequations}
\label{eq:log_vanishing}
\begin{align} 
  \frac{G(z)}{\psi(z)} & = -\frac{1}{z-q_j}+\left(-\frac{b_2}{b_1}-\frac{2 c_4}{c_3}\right)+\mathcal{O}(z-q_j), \\
  \frac{H(z)}{(\psi(z))^2} & = \frac{2 b_2}{b_1} \frac{1}{z-q_j}+\left(\frac{2a_1 b_2 -2 a_2 b_1}{a_0 b_1}+\frac{4 b_1 b_2 c_4-2b_2^2 c_3}{b_1^2 c_3}\right)+\mathcal{O}(z-q_j), \\
  \frac{I(z)}{(\psi(z))^3} & = \frac{2 a_2 b_1-2 a_1 b_2}{a_0 b_1} \frac{1}{z-q_j} + \frac{2 \left(a_2 b_1-a_1 b_2\right) \left(2 b_1 c_4 -b_2 c_3\right)}{a_0 b_1^2 c_3}+\mathcal{O}(z-q_j).
\end{align}
\end{subequations}
\end{lem}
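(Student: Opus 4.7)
Plan: The plan is to substitute the hypothetical log-free fundamental system of Equation~\eqref{eq:sol_total} (with all $\kappa_{k_1,k_2}^j$ set to zero) into the Fuchsian equation~\eqref{eq:diffeq} and match coefficients in the local variable $\zeta := z - q_j$ order by order. Write $B_k(z) = \sum_{n \geq -k} B_{k,n}\zeta^n$. The indicial equation~\eqref{eq:indicial} applied at $q_j$ with exponents $(0,1,3)$ already pins down the leading poles: $B_{1,-1} = -1$, $B_{2,-2}=0$ and $B_{3,-3}=B_{3,-2}=0$. This accounts for the orders of the poles appearing in the three expansions stated in the lemma, so the remaining content is the identification of the coefficients $B_{2,-1}$, $B_{3,-1}$ and of the constant terms $B_{1,0}$, $B_{2,0}$, $B_{3,0}$.

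Forward direction: I substitute each of $w_1,w_2,w_3$ into the ODE and read off relations between the $B_{k,n}$ and the Taylor coefficients $a_k,b_k,c_k$. The $\zeta^{-1}$-coefficient of the equation applied to $w_2$ isolates $B_{2,-1}$ and yields $B_{2,-1}=2b_2/b_1$; the $\zeta^1$-coefficient applied to $w_3$ (the first nontrivial order for $w_3$) combined with this then isolates $B_{1,0} = -b_2/b_1 - 2c_4/c_3$; finally the $\zeta^{-1}$-coefficient applied to $w_1$ isolates $B_{3,-1} = 2(a_2 b_1 - a_1 b_2)/(a_0 b_1)$. The constants $B_{2,0}$ and $B_{3,0}$ then follow from the $\zeta^0$-coefficients applied to $w_1$ and $w_2$ by back-substitution, and after simplification they match exactly the expressions stated in the lemma.

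Converse direction: Assume the Laurent expansions have the stated form. I run the Frobenius recursion at $q_j$ for each of the three exponents $\rho = 0, 1, 3$. Since these pairwise differ by positive integers, three resonance obstructions may arise, indexed by $\kappa_{1,2}^j$, $\kappa_{1,3}^j$, $\kappa_{2,3}^j$. I verify that the three nontrivial scalar conditions on the Laurent coefficients of $B_1, B_2, B_3$ extracted in the forward direction are precisely the conditions that kill these three resonances, so the three log-free solutions listed in \eqref{eq:sol_total} do exist.

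The main obstacle is the bookkeeping in the converse direction: one must confirm that the conditions extracted in the forward direction exhaust the obstructions to log-freeness, with no hidden higher-order constraint. A cleaner uniform alternative is to use Cramer's rule and write $B_1 = -W_3/W$, $B_2 = W_2/W$, $B_3 = -W_1/W$ as ratios of $3\times 3$ Wronskian-type determinants built from $(w_1, w_2, w_3)$; then the Laurent expansions are obtained by expanding these determinants in $\zeta$ to the required order, and the two directions coincide. The price is a slightly heavier initial computation of the Wronskian $W = 6 a_0 b_1 c_3 \zeta + O(\zeta^2)$ and its companion determinants, but the outcome is the same identification of Laurent coefficients.
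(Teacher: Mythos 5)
Your route is genuinely different from the paper's. The paper forms the fourth-order Wronskian of $(w_1,w_2,w_3,w)$ with the logarithmic terms and the constants $\kappa^j_{k_1,k_2}$ kept \emph{general}, divides by the coefficient of $w'''$, and expands the resulting $B_1,B_2,B_3$ in Laurent series; both directions of the equivalence then follow by comparing those general expansions (which display the $\kappa$'s explicitly, e.g.\ a term $\frac{2b_1\kappa^j_{1,2}}{a_0}(z-q_j)^{-2}$ in $I/\psi^3$) with the stated ones. Your primary plan --- substituting each $w_k$ into the equation and matching orders, then running the Frobenius recursion for the converse --- reaches the same identifications by more elementary means; your ``cleaner uniform alternative'' via Cramer's rule is essentially the paper's method, except that to obtain the converse one must keep the $\kappa$'s in the Wronskian rather than work only with the log-free system, since Cramer's rule applied to the log-free triple only yields one implication.

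There is, however, one concrete error. The indicial equation at $q_j$ with exponents $(0,1,3)$ determines only the leading coefficients $B_{1,-1}=-1$, $B_{2,-2}=0$, $B_{3,-3}=0$; it does \emph{not} give $B_{3,-2}=0$. (Example: $w'''-z^{-1}w''+c\,z^{-2}w=0$ has indicial roots $0,1,3$ for every $c$, yet the order-one step of the exponent-$0$ recursion reads $c\,a_0=0$, so a log-free exponent-$0$ solution exists only if $c=0$.) In fact $B_{3,-2}=0$ is precisely the resonance obstruction at order $1$ for the exponent-$0$ solution, i.e.\ it is equivalent to $\kappa^j_{1,2}=0$, and it is one of the three nontrivial constraints your converse direction must account for. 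By filing it under the indicial equation you are left with only two resonance-killing conditions among the five coefficients you list as ``the remaining content'': the identifications of $B_{2,-1}$, $B_{3,-1}$ and $B_{1,0}$ are automatic recursion identities (coming from the non-resonant steps $m=2$ for the exponent-$0$ and exponent-$1$ solutions and $m=4$ for the exponent-$3$ solution), while those of $B_{2,0}$ and $B_{3,0}$ kill $\kappa^j_{2,3}$ and $\kappa^j_{1,3}$. So the count of obstructions versus conditions does not close as written. The fix is immediate --- the absence of a $(z-q_j)^{-2}$ term in the stated expansion of $I/\psi^3$ \emph{is} the third condition and must be invoked as such --- but the plan as stated asserts a false implication and would miss the $\kappa^j_{1,2}$ obstruction.
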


\begin{proof}
Write the Wronskian from  the three functions of Equations~\eqref{eq:sol_total} and a function $w(z)$ as a general solution of Equation~\eqref{eq:diffeq}. Convert the Wronskian to the shape of \eqref{eq:diffeq} with dividing by coefficient of $w'''(z)$. Note that it is enough to compute terms with non-positive exponents in $(z-q_j)$ in the Wronskian, because the higher order terms do not give contributions to coefficients of lower $(z-q_j)$-powers. Consider the Laurent series expansions of the coefficients about $q_j$.  

\begin{footnotesize}
\begin{align*}
  \frac{G(z)}{\psi(z)} =& -\frac{1}{z-q_j}+\left(\frac{2 b_1 \kappa _{1,2}^j}{3 a_0}-\frac{b_2}{b_1}-\frac{2 c_4}{c_3}\right)+\mathcal{O}(z-q_j), \\
  \frac{H(z)}{(\psi(z))^2} =& \left( \frac{2 b_2}{b_1}-\frac{2 b_1 \kappa _{1,2}^j}{a_0}\right) \frac{1}{z-q_j} -\frac{2}{3 a_0^2 b_1^2 c_3} \left(3 a_0 b_1^3 c_4 \kappa _{1,2}^j-a_0 b_2 b_1^2 c_3 \kappa _{1,2}^j+9 a_0^2 b_1 c_3^2 \kappa _{2,3}^j\right.+\\
	&+\left.2 b_1^4 c_3 (\kappa _{1,2}^j)^2+3 a_0 a_2 b_1^2 c_3-3 a_0 a_1 b_2 b_1 c_3-6 a_0^2 b_2 b_1 c_4+3 a_0^2 b_2^2 c_3\right)	+\mathcal{O}(z-q_j), \\
  \frac{I(z)}{(\psi(z))^3} =& \frac{2 b_1 \kappa_{1,2}^j}{a_0} \frac{1}{(z-q_j)^2} + \frac{2 \left(3 a_0 b_1^2 c_4 \kappa _{1,2}^j+2 b_1^3 c_3 (\kappa _{1,2}^j)^2+3 a_0 a_2 b_1 c_3-3 a_0 a_1 b_2 c_3\right)}{3 a_0^2 b_1 c_3} \frac{1}{z-q_j} - \\
	& -\frac{1}{9 a_0^3 b_1^2 c_3^2} \left(-15 a_0 b_1^4 c_3 c_4 (\kappa _{1,2}^j)^2+3 a_0 b_2 b_1^3 c_3^2 (\kappa _{1,2}^j)^2-18 a_0 a_2 b_1^3 c_3^2 \kappa _{1,2}^j+36 a_0^2 b_1^3 c_4^2 \kappa _{1,2}^j\right.+ \\
	& + \left. 18 a_0 a_1 b_2 b_1^2 c_3^2 \kappa _{1,2}^j+54 a_0^2 b_3 b_1^2 c_3^2 \kappa _{1,2}^j-30 a_0^2 b_2 b_1^2 c_3 c_4 \kappa _{1,2}^j+54 a_0^2 b_1^2 c_3^3 \kappa _{1,3}^j\right.+\\
	&+\left. 63 a_0^2 b_1^2 c_3^3 \kappa _{1,2}^j \kappa _{2,3}^j-18 a_0^2 b_2^2 b_1 c_3^2 \kappa _{1,2}^j-54 a_0^2 a_1 b_1 c_3^3 \kappa _{2,3}^j\right.+ \\
	&+\left. 54 a_0^2 b_1^2 c_3^3 \kappa _{1,2}^j \kappa _{2,3}^j \log (z-q_j) -8 b_1^5 c_3^2 (\kappa _{1,2}^j)^3-36 a_0^2 a_2 b_1^2 c_3 c_4+18 a_0^2 a_2 b_2 b_1 c_3^2\right. +\\
	&+\left. 36 a_0^2 a_1 b_2 b_1 c_3 c_4-18 a_0^2 a_1 b_2^2 c_3^2\right) +\mathcal{O}(z-q_j).
\end{align*}
\end{footnotesize}

If Equations~\eqref{eq:log_vanishing} hold then a routine check using $a_0\neq 0$, $b_1\neq 0$, $c_3\neq 0$ shows
$\kappa^j_{k_1,k_2}=0$ for all $k_1,k_2 \in \{1,2,3\}$, hence there are no logarithmic terms in Equation~\eqref{eq:sol_total}.

In the converse direction we choose $\kappa^j_{k_1,k_2}$ for $0$ in Equations~\eqref{eq:sol_total} and after that we make a Wronskian in the previous way. After the conversion and series expansions we get Equations~\eqref{eq:log_vanishing}.
\end{proof}

Take the series expansions by definition of the coefficients of the differential equation~\eqref{eq:diffeq} and use the known $G_0^{q_j}$, $H_0^{q_j}$, $I_0^{q_j}$ values.
\begin{subequations}
\label{eq:log_vanishing_2}
\begin{align}
  \frac{G(z)}{\psi(z)} & = -\frac{1}{z-q_j} + G_1^{q_j} +\mathcal{O}(z-q_j), \\
  \frac{H(z)}{(\psi(z))^2} & = 0 + \frac{H_1^{q_j}}{z-q_j} + H_2^{q_j} +\mathcal{O}(z-q_j), \\
  \frac{I(z)}{(\psi(z))^3} & = 0 + \frac{I_1^{q_j}}{(z-q_j)^2} + \frac{I_2^{q_j}}{z-q_j} + I_3^{q_j} +\mathcal{O}(z-q_j).
\end{align}
\end{subequations}

Compare the coefficients from the series expansions of the coefficients of the Wronskian and the coefficients from the series expansions of coefficients of Equation~\eqref{eq:diffeq}, namely Equations~\eqref{eq:log_vanishing} and Equations~\eqref{eq:log_vanishing_2}. Note that the value $H_1^{q_j}$ depends on $b_1$ and $b_2$ only, hence $H_1^{q_j}$ will be the \emph{free parameter} (possible up to an affine transformation) which is denoted by $p_j$  \cite{dubrovin}. These will be the remaining $N$ Darboux coordinates on the moduli space next to the $q_j$'s. Elimination of Taylor coefficients from Conditions~\eqref{eq:log_vanishing} yield:
\begin{subequations}
\label{eq:ghi_123}
\begin{align}
  \label{eq:i1}
	I_1^{q_j} &= 0, \\
	\label{eq:h1}
  H_1^{q_j} &= p_j, \\
  \label{eq:h2i2}
  G_1^{q_j} H_1^{q_j} + \left( H_1^{q_j} \right)^2 + H_2^{q_j} + I_2^{q_j} &= 0, \\
  \label{eq:i2i3}
  G_1^{q_j} I_2^{q_j} + H_1^{q_j} I_2^{q_j} + I_3^{q_j} &= 0. 
\end{align}
\end{subequations}

\begin{prop}
Equations~\eqref{eq:ghi_123} are equivalent to the vanishing of the logarithmic terms.
\end{prop}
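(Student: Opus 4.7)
The plan is to reduce the claim to a direct algebraic equivalence between two Laurent expansions. By the preceding lemma, the vanishing of the logarithmic terms is equivalent to Equations~\eqref{eq:log_vanishing}, which express the singular parts of $G/\psi$, $H/\psi^2$ and $I/\psi^3$ at $q_j$ in terms of the Taylor coefficients $a_0,a_1,a_2,b_1,b_2,c_3,c_4$ of the local fundamental system~\eqref{eq:sol_total}. Meanwhile, Equations~\eqref{eq:log_vanishing_2} are the tautological Laurent expansions of the same functions in terms of their coefficients $G_k^{q_j},H_k^{q_j},I_k^{q_j}$. Equating the two expansions power by power in $(z-q_j)$ gives a system of polynomial relations; eliminating the Taylor coefficients will produce exactly~\eqref{eq:ghi_123}.

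First I would read off the easy matches: the leading singular terms of all three functions agree automatically, and the coefficient of $(z-q_j)^{-2}$ in $I/\psi^3$ gives~\eqref{eq:i1}. The coefficient of $(z-q_j)^{-1}$ in $H/\psi^2$ yields $H_1^{q_j}=2b_2/b_1$; since $b_2$ is a free Taylor coefficient of $w_2$ and $b_1\neq 0$, the value of $H_1^{q_j}$ is unconstrained at this order and plays the role of the auxiliary parameter $p_j$, which is~\eqref{eq:h1}. The constant term of $G/\psi$ then determines $c_4/c_3$ in terms of $G_1^{q_j}$ and $b_2/b_1$, and the coefficient of $(z-q_j)^{-1}$ in $I/\psi^3$ determines the combination $(a_2 b_1-a_1 b_2)/(a_0 b_1)$ in terms of $I_2^{q_j}$.

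The two remaining relations come from the constant terms of $H/\psi^2$ and $I/\psi^3$. Substituting the previously obtained expressions for $b_2/b_1$, $c_4/c_3$ and $(a_2 b_1-a_1 b_2)/(a_0 b_1)$ into them eliminates every Taylor coefficient and leaves purely polynomial identities in the $G_k^{q_j},H_k^{q_j},I_k^{q_j}$. A direct expansion should collapse the first to~\eqref{eq:h2i2}, and the second should factor through $I_2^{q_j}$ to give~\eqref{eq:i2i3}. For the converse, the same computation is read backwards: given Laurent coefficients satisfying~\eqref{eq:ghi_123}, the ratios above are recovered, and the residual freedom in $a_0,b_1,c_3$ (and in $a_1,a_2$ individually, once their combination is fixed) matches the gauge of the parametrization~\eqref{eq:sol_total}.

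The main obstacle will be purely computational: a disciplined elimination of four auxiliary ratios from four scalar relations. Each step is linear in the ratio being solved for, so no case analysis is needed and the equivalence is a clean algebraic identity; however, the constant-term expressions from the lemma are bulky, and the cancellations that produce~\eqref{eq:h2i2} and~\eqref{eq:i2i3} must be tracked with care to avoid introducing spurious denominators.
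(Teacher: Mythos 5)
Your proposal is correct and coincides with the paper's own (essentially unproved, \qed'd) argument: the paper likewise obtains Equations~\eqref{eq:ghi_123} by matching the Wronskian expansions~\eqref{eq:log_vanishing} against the tautological Laurent expansions~\eqref{eq:log_vanishing_2}, taking $H_1^{q_j}=2b_2/b_1$ as the free parameter $p_j$, and eliminating the Taylor-coefficient ratios. The two identities you predict do collapse as claimed (with $B=b_2/b_1$, $C=c_4/c_3$, $A=(a_2b_1-a_1b_2)/(a_0b_1)$ one checks $G_1H_1+H_1^2+H_2+I_2=0$ and $G_1I_2+H_1I_2+I_3=0$ identically), so the elimination is exactly the paper's route.
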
  \qed

Compute terms $G_1^{q_j}$, $H_2^{q_j}$, $I_2^{q_j}$ and $I_3^{q_j}$ from the definition of the Laurent series. We neglect terms $G_1^{q_j}$ now, because it will turn out in Subsection~\ref{sec:gj_comput} these are zeros.
{\allowdisplaybreaks
\begin{align*}
    H_1^{q_j} &= \frac{\mathrm{d}}{\mathrm{d}z} \left. \left[ (z-q_j)^2 \frac{H(z)}{\psi^2} \right] \right|_{z=q_j} = \mu_j H'(q_j), \\
    H_2^{q_j} &= \frac{1}{2!} \frac{\mathrm{d^2}}{\mathrm{d}z^2} \left. \left[ (z-q_j)^2 \frac{H(z)}{\psi^2} \right] \right|_{z=q_j} = p_j \frac{\tilde{\mu_j}}{\mu_j} + \frac{\mu_j}{2} H''(q_j), \\
		I_1^{q_j} &= \frac{\mathrm{d}}{\mathrm{d}z} \left. \left[ (z-q_j)^3 \frac{I(z)}{\psi^3} \right] \right|_{z=q_j} = \nu_j I'(q_j), \\
    I_2^{q_j} &= \frac{1}{2!} \frac{\mathrm{d^2}}{\mathrm{d}z^2} \left. \left[ (z-q_j)^3 \frac{I(z)}{\psi^3} \right] \right|_{z=q_j} = \frac{\nu_j}{2} I''(q_j), \\
    I_3^{q_j} &= \frac{1}{3!} \frac{\mathrm{d^3}}{\mathrm{d}z^3} \left. \left[ (z-q_j)^3 \frac{I(z)}{\psi^3} \right] \right|_{z=q_j} = \frac{\tilde{\nu}_j}{2} I''(q_j) + \frac{\nu_j}{6} I'''(q_j),
\end{align*}}
where we introduce some notations:
\begin{subequations}
\label{eq:mu-nu}
\begin{align}
  \mu_j & := \left. \frac{(z-q_j)^2}{\psi^2 (z)} \right|_{z=q_j}, \\
  \tilde{\mu}_j &:= \left. \frac{\mathrm{d}}{\mathrm{d}z} \left[ \frac{(z-q_j)^2}{\psi^2 (z)}\right] \right|_{z=q_j}, \\
  \nu_j & := \left. \frac{(z-q_j)^3}{\psi^3 (z)}\right|_{z=q_j}, \\
  \tilde{\nu}_j & := \left. \frac{\mathrm{d}}{\mathrm{d}z} \left[ \frac{(z-q_j)^3}{\psi^3 (z)}\right] \right|_{z=q_j}.
\end{align}
\end{subequations}

Substitute these $H_1^{q_j}$, $H_2^{q_j}$, $I_1^{q_j}$, $I_2^{q_j}$ and $I_3^{q_j}$ terms to Equations~\eqref{eq:ghi_123} and rearrange:
\begin{subequations}
\begin{align}
  \label{eq:iqj_second}
  \sum_{k=1}^{12n-18} k q_j^{k-1} I_k &= 0, \\
  \label{eq:hqj_second}
  \sum_{k=1}^{8n-12} k q_j^{k-1} H_k &= \frac{p_j}{\mu_j}, \\
	\label{eq:hqj-iqj_0}
  \mu_j \frac{H''(q_j)}{2} + \nu_j \frac{I''(q_j)}{2} &= - G_1^{q_j} p_j - p_j^2 - \frac{\tilde{\mu}_j}{\mu_j} p_j, \\
	\label{eq:iqj_third_0}
  \left( (G_1^{q_j} + p_j) \nu_j + \tilde{\nu}_j \right) \frac{I''(q_j)}{2} + \nu_j \frac{I'''(q_j)}{6} &= 0,
\end{align}
\end{subequations}
where introduce the following notation:
\begin{equation}
		\label{eq:omega}
    \omega_j := (G_1^{q_j} + p_j) \nu_j + \tilde{\nu}_j. 
\end{equation}

Finally, from Equations~\eqref{eq:hqj-iqj_0} and \eqref{eq:iqj_third_0} we get $3n-5$ linear equations in the variables $H_k$ and $I_k$ ($k=2,\dots, 8n-12$ or $k=2,\dots, 12n-18$ respectively):
\begin{subequations}
\begin{align}
  \label{eq:hqj-iqj}
  \sum_{k=2}^{8n-12} \binom{k}{2} \mu_j q_j^{k-2} H_k + \sum_{k=2}^{12n-18} \binom{k}{2} \nu_j q_j^{k-2} I_k &= - G_1^{q_j} p_j - p_j^2 - \frac{\tilde{\mu}_j}{\mu_j} p_j, \\
  \label{eq:iqj_third}
  \sum_{k=2}^{12n-18} \left( \binom{k}{2} \omega_j q_j^{k-2} + \binom{k}{3} \nu_j q_j^{k-3} \right) I_k &= 0.
\end{align}
\end{subequations}

\begin{defn}
	The Equation~\eqref{eq:hqj-iqj} contains $3n-5$ relations for coefficients of $H$ and $I$ together. Later we will refer these $3n-5$ equations as \emph{common rows}.
\end{defn}

\subsection{Computation of \texorpdfstring{$G_1^{q_j}$}{G\textoneinferior(qj)}}
\label{sec:gj_comput}
The term $G_1^{q_j}$ appears in some previous equations as \eqref{eq:omega} and \eqref{eq:hqj-iqj}, thus we need to compute this value. 

Consider the right hand side of Equation~\eqref{eq:gqj} and introduce the notation
\begin{equation}
\label{eq:g_eta}
G(q_j) = - \frac{1}{\prod_{k=1}^n (q_j-t_k) \prod_{l=1, l\neq j}^{3n-5} (q_j-q_l)}=:-\eta_j
\end{equation} 
The term $G_1^{q_j}$ is the zeroth coefficient of the Laurent series of $\frac{G(z)}{\psi(z)}$ about $q_j$. Use Expression~\eqref{eq:psi} and expression~$\eta_j$:
\begin{equation*}
 G_1^{q_j} = \frac{\mathrm{d}}{\mathrm{d}z} \left. \left[ (z-q_j) \frac{G(z)}{\psi(z)} \right] \right|_{z=q_j}= \frac{\mathrm{d}}{\mathrm{d}q_j} \left( \eta_j G(q_j)\right).
\end{equation*}
Substitute Equation~\eqref{eq:g_eta} and conclude that we have to differentiate $-1$ which is zero, hence 
\begin{equation*}
	G_1^{q_j}=0.
\end{equation*}

\subsection{Summary of equations}
\label{sec:system_of_equations}
Equations~\eqref{eq:gt0}, \eqref{eq:gti} and \eqref{eq:gqj} give $4n-4$ relations between $4n-5$ coefficients of $G$, but the Fuchs relation (Equation~\eqref{eq:fuchs}) reduces the number of linearly independent equations by one. Equations~\eqref{eq:ht0}, \eqref{eq:hti}, \eqref{eq:hqj} and \eqref{eq:hqj_second} give $7n-9$ constraints for $H$, and Equations~\eqref{eq:it0}, \eqref{eq:iti}, \eqref{eq:iqj}, \eqref{eq:iqj_second} and \eqref{eq:iqj_third} give $10n-14$ constraints for $I$. Furthermore, Equation~\eqref{eq:hqj-iqj} contains $3n-5$ relations for coefficients of $H$ and $I$ together (these are the common rows). The system of linear equations consists Equations~\eqref{eq:ht0}, \eqref{eq:hti}, \eqref{eq:hqj}, \eqref{eq:hqj_second}, \eqref{eq:hqj-iqj}, \eqref{eq:it0}, \eqref{eq:iti}, \eqref{eq:iqj}, \eqref{eq:iqj_second} and \eqref{eq:iqj_third} will be denoted by $(T)$.

The total number of equations for the coefficients of $H$ and $I$ is $24n-33$. This is the same number as the number of coefficients contained in $H$ and $I$.

We note that the right hand side of this linear system of equations is known, since the parabolic points $t_i$ and their exponents $\rho_{i,k}$ are given; the apparent singularities $q_j$ and parameters $p_j$ are freely chosen. 

We need to prove that the linear system of equations in the variables of coefficients of $G$, $H$ and $I$ has a unique solution, i. e. the coefficient matrix has full rank. First, we study the cases $n=2$ and $n=3$ because the general case will use the method which will be written in the case $n=3$.

\begin{prop}
\label{prop:G_matrix}
	The coefficient matrix in question is a block diagonal matrix and the first block which refers to the variables of coefficients of $G$ has full rank.
\end{prop}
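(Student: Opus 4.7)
My plan is to verify the block-diagonal decomposition by tracing which unknowns appear in which equations, then to identify the $G$-block as a Vandermonde matrix whose determinant is visibly nonzero away from $\Delta$.

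First I would check the decoupling. Among the rows of the system $(T)$, only \eqref{eq:gt0}, \eqref{eq:gti}, and \eqref{eq:gqj} involve the $G$-coefficients. A potential coupling to the $(H,I)$-block would enter through the terms $G_1^{q_j}$ appearing in \eqref{eq:omega} and in the right-hand side of \eqref{eq:hqj-iqj}; however, Subsection~\ref{sec:gj_comput} establishes $G_1^{q_j}=0$, so no $G$-variable ever occurs in any $H$- or $I$-row. Conversely, \eqref{eq:gt0}, \eqref{eq:gti}, \eqref{eq:gqj} contain no $H$- or $I$-unknowns. This yields the claimed block-diagonal form.

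Second I would display the $G$-block explicitly. Its rows are: the single vector $(0,\ldots,0,1)$ coming from \eqref{eq:gt0}; the $n$ vectors $(1, t_i, t_i^2, \ldots, t_i^{4n-6})$ from \eqref{eq:gti}; and the $3n-5$ vectors $(1, q_j, q_j^2, \ldots, q_j^{4n-6})$ from \eqref{eq:gqj}. Discarding the first row leaves a square $(4n-5)\times(4n-5)$ matrix which is the classical Vandermonde matrix on the $4n-5$ nodes $t_1,\ldots,t_n,q_1,\ldots,q_{3n-5}$. Since we work outside $\Delta$, these nodes are pairwise distinct, so the determinant
$$\prod_{1\le i_1<i_2\le n}(t_{i_2}-t_{i_1})\;\prod_{1\le j_1<j_2\le 3n-5}(q_{j_2}-q_{j_1})\;\prod_{i=1}^{n}\prod_{j=1}^{3n-5}(q_j-t_i)$$
is nonzero, and the block has rank $4n-5$. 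Equivalently, a polynomial of degree at most $4n-6$ is uniquely determined by its values at $4n-5$ distinct points (Lagrange interpolation); the discarded $(t_0)$-row, which fixes the leading coefficient, is therefore a linear consequence of the remaining rows, and compatibility is guaranteed by the Fuchs relation \eqref{eq:fuchs}.

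The only place where genericity enters is the distinctness of the interpolation nodes, which is exactly the assumption that we lie in the complement of $\Delta$. Apart from this, the argument is elementary: the block-diagonal structure is a direct bookkeeping check using $G_1^{q_j}=0$, and the rank statement is the classical Vandermonde identity. I expect no serious obstacle; the more delicate rank analysis is postponed to the $(H,I)$-block treated later in the paper.
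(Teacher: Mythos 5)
Your proof is correct and follows essentially the same route as the paper's: the $G$-equations \eqref{eq:gt0}, \eqref{eq:gti}, \eqref{eq:gqj} decouple from the $H,I$-unknowns (with the converse decoupling resting on $G_1^{q_j}=0$ from Subsection~\ref{sec:gj_comput}), the Fuchs relation \eqref{eq:fuchs} lets one discard the $t_0$-row, and the remaining square block is an ordinary Vandermonde matrix on the pairwise distinct nodes $t_1,\dots,t_n,q_1,\dots,q_{3n-5}$, hence nonsingular off $\Delta$. Your added remarks on the explicit determinant and on consistency of the discarded row are elaborations of, not departures from, the paper's argument.
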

\begin{proof}
	The first $4n-4$ rows of the coefficient matrix are independent of other rows, because Equations~\eqref{eq:gt0}, \eqref{eq:gti} and \eqref{eq:gqj} do not contain coefficients of $H$ or $I$. Moreover, the Fuchs relation (Equation~\eqref{eq:fuchs}) reduces the rank by one, hence we may ignore the first row. The remaining block is a Vandermonde matrix which has nonzero determinant according to Proposition~\ref{prop:conf_vand}.
\end{proof}
According to this proposition the determinant of the block of coefficient matrix which determines the coefficients of $G$ does not vanish and it provides the following (trivial) discriminant variety in \mbox{$T^{*}(\mathbb{C} \setminus P)^4$}:
\begin{equation*}
	V_0:=\left\{ \prod_{\substack{1\leq i_1<i_2\leq 3 \\ 1\leq j_1<j_2\leq 4}} (t_{i_2}-t_{i_1}) \cdot (q_{j_2}-q_{j_1})
	 \prod_{\substack{1\leq i \leq 3 \\ 1\leq j\leq 4}} (q_{j}-t_{i}) =0 \right\}.
\end{equation*}
\noindent The variety $V_0$ coincides with the subset $\Delta$ hence we do not deal with the block of coefficient matrix which determines the coefficients of $G$.

\section{The case \texorpdfstring{$n=3$}{n=3}}
\label{sec:n=3}
In this section we will give an argument supporting for Conjecture~\ref{cj:main_V2}. Consider System~$(T)$ from Subsection~\ref{sec:system_of_equations} and its coefficient matrix $M_1$. The determinant of this matrix will define a discriminant variety $V_1$ on \mbox{$T^{*}(\mathbb{C} \setminus P)^4$}. 
The augmented matrix of System $(T)$ will be $M_b$ which will define another discriminant variety $\hat{V}$.
Our purpose is to analyze the special case when $\mathsf{rk}(M_1)=\mathsf{rk}(M_b) < 20n-28 = 32$ that is the System~$(T)$ is under-determined. We will analyze the case $\mathsf{rk}(M_1)=31$ only. For this purpose, we pick a minor of $M_1$ which defines a variety $W$. Finally, we will specify a one-parameter family of solutions of System~$(T)$ in the subvariety $(V_1 \cap \hat{V}) \setminus W$. 

\subsection{The case \texorpdfstring{$n=2$}{n=2}}
Before we analyze the case $n=3$, for the sake of completeness, we review the case $n=2$ shortly. 
The parabolic points: $P= \left\{ t_0, t_1, t_2 \right\}$, the apparent singularity: $Q= \left\{ q_1 \right\}$, the coefficients of Equation~\eqref{eq:diffeq}:
\begin{align*}
  G(z) & = G_0 + G_1 z + G_2 z^2, \\
  H(z) & = H_1 + H_1 z + H_2 z^2 + H_3 z^3 + H_4 z^4, \\
  I(z) & = I_0 + I_1 z + I_2 z^2 + I_3 z^3 + I_4 z^4 + I_5 z^5 + I_6 z^6.
\end{align*}
The constants, which appear in the coefficient matrix are the following:
\begin{align*}
  \mu_1 & = \frac{1}{\left(q_1-t_1\right){}^2 \left(q_1-t_2\right){}^2}, \\
  \nu_1 & = \frac{1}{\left(q_1-t_1\right){}^3 \left(q_1-t_2\right){}^3}, \\
  \omega_1 & = -\frac{3}{\left(q_1-t_1\right){}^3 \left(q_1-t_2\right){}^4}-\frac{3}{\left(q_1-t_1\right){}^4 \left(q_1-t_2\right){}^3}+\frac{p_1+g_1}{\left(q_1-t_1\right){}^3 \left(q_1-t_2\right){}^3}.
\end{align*}

Use System~$(T)$ and get a coefficient matrix for the variables of coefficients of $H$ and $I$.

\begin{scriptsize}
\begin{equation*}
\left( \begin{array}{cccccccccccc}
 0 & 0 & 0 & 0 & 1 & 0 & 0 & 0 & 0 & 0 & 0 & 0 \\
 1 & t_1 & t_1{}^2 & t_1{}^3 & t_1{}^4 & 0 & 0 & 0 & 0 & 0 & 0 & 0 \\
 1 & t_2 & t_2{}^2 & t_2{}^3 & t_2{}^4 & 0 & 0 & 0 & 0 & 0 & 0 & 0 \\
 1 & q_1 & q_1{}^2 & q_1{}^3 & q_1{}^4 & 0 & 0 & 0 & 0 & 0 & 0 & 0 \\
 0 & 1 & 2 q_1 & 3 q_1{}^2 & 4 q_1{}^3 & 0 & 0 & 0 & 0 & 0 & 0 & 0 \\
 0 & 0 & \mu_1 & 3\mu_1 q_1 & 6\mu_1 q_1{}^2 & 0 & 0 & \nu _1 & 3\nu _1 q_1 & 6\nu _1 q_1{}^2 & 10\nu _1 q_1{}^3 & 15\nu
_1 q_1{}^4 \\
 0 & 0 & 0 & 0 & 0 & 0 & 0 & 0 & 0 & 0 & 0 & 1 \\
 0 & 0 & 0 & 0 & 0 & 1 & t_1 & t_1{}^2 & t_1{}^3 & t_1{}^4 & t_1{}^5 & t_1{}^6 \\
 0 & 0 & 0 & 0 & 0 & 1 & t_2 & t_2{}^2 & t_2{}^3 & t_2{}^4 & t_2{}^5 & t_2{}^6 \\
 0 & 0 & 0 & 0 & 0 & 1 & q_1 & q_1{}^2 & q_1{}^3 & q_1{}^4 & q_1{}^5 & q_1{}^6 \\
 0 & 0 & 0 & 0 & 0 & 0 & 1 & 2 q_1 & 3 q_1{}^2 & 4 q_1{}^3 & 5 q_1{}^4 & 6 q_1{}^5 \\
 0 & 0 & 0 & 0 & 0 & 0 & 0 & \omega _1 & 3\omega _1q_1+\nu _1 & \Lambda_1 & \Lambda_2 & \Lambda_3
\end{array} \right),
\end{equation*}
\end{scriptsize}

\noindent where $\Lambda_1 = 6\omega _1q_1{}^2+4\nu _1 q_1$, $\Lambda_2 = 10\omega _1q_1{}^3+10\nu _1 q_1{}^2$, $\Lambda_3 = 15\omega _1q_1{}^4+20\nu$. 
Compute its determinant with Laplace expansion along the sixth row. The determinant is $-(t_1-t_2)^2$ which does not vanish since $t_i \neq t_j$ if $i \neq j$. Thus the coefficients of $G$, $H$ and $I$ are determined, the differential equation~\eqref{eq:diffeq} is also determined and Theorem~\ref{th:main_V1} is valid in the case $n=2$. The set $V$ is the empty set.

\subsection{Computation of a discriminant variety}
\label{sec:M1}
In this subsection we construct the variety $V_1$. From now on, let $n=3$ and fix the parabolic points and apparent singularities as $P=\left\{ t_0, t_1, t_2, t_3\right\}$ and $Q=\left\{ q_1, q_2, q_3, q_4 \right\}$. The free parameters are $\overset{\sim}{Q} = \left\{p_1, p_2, p_3, p_4 \right\}$. The numerators of coefficients of differential equation~\eqref{eq:diffeq} are:
\begin{align*}
  G(z) &= G_0 + G_1 z + \dots+ G_{6} z^{6}, \\
  H(z) &= H_0 + H_1 z + \dots+ H_{12} z^{12}, \\
  I(z) &= I_0 + I_1 z + \dots+ I_{18} z^{18}, \\
	\intertext{and their denominators are successive powers of}
  \psi(z) &= \prod_{i=1}^3 (z-t_i) \prod_{j=1}^{4} (z-q_j).
\end{align*}
The expression~$\psi (z)$, the sets $P$, $Q$ and $\overset{\sim}{Q}$ and the exponents~$\rho_{i,k}$ ($i=0,\dots, 3$; $k=1,2,3$) specify the constants $\mu_j$, $\nu_j$, $\tilde{\nu}_j$ and $\omega_j$. The index $j$ refers to $1,\dots, 4$ through this section.
Again, consider System~$(T)$ from Subsection~\ref{sec:system_of_equations} and get the following $32 \times 32$ matrix (denoted by $M_1$) for the variables of coefficients of $H$ and $I$. 
\begin{scriptsize}
\begin{displaymath}
\left(
\begin{array}{ccccccccccccc}
 0 & 0 & 0 & 0 & \cdots & 1 & 0 & 0 & 0 & 0 & \cdots & 0 \\
 1 & t_1 & t_1^2 & t_1^3 & \cdots & t_1^{12} & 0 & 0 & 0 & 0 & \cdots & 0 \\
 1 & t_2 & t_2^2 & t_2^3 & \cdots & t_2^{12} & 0 & 0 & 0 & 0 & \cdots & 0 \\
 1 & t_3 & t_3^2 & t_3^3 & \cdots & t_3^{12} & 0 & 0 & 0 & 0 & \cdots & 0 \\
 1 & q_1 & q_1^2 & q_1^3 & \cdots & q_1^{12} & 0 & 0 & 0 & 0 & \cdots & 0 \\
 1 & q_2 & q_2^2 & q_2^3 & \cdots & q_2^{12} & 0 & 0 & 0 & 0 & \cdots & 0 \\
 1 & q_3 & q_3^2 & q_3^3 & \cdots & q_3^{12} & 0 & 0 & 0 & 0 & \cdots & 0 \\
 1 & q_4 & q_4^2 & q_4^3 & \cdots & q_4^{12} & 0 & 0 & 0 & 0 & \cdots & 0 \\
 0 & 1 & 2 q_1 & 3 q_1^2 & \cdots & 12 q_1^{11} & 0 & 0 & 0 & 0 & \cdots & 0 \\
 \vdots &  &  &  &  &  &  &  &  &  &  & \\
 0 & 0 & \mu _1 & 3 q_1 \mu _1 & \cdots & 66 q_1^{10} \mu _1 & 0 & 0 & \nu _1 & 3 q_1 \nu _1 & \cdots & 153 q_1^{16} \nu _1 \\
 0 & 0 & \mu _2 & 3 q_2 \mu _2 & \cdots & 66 q_2^{10} \mu _2 & 0 & 0 & \nu _2 & 3 q_2 \nu _2 & \cdots & 153 q_2^{16} \nu _2 \\
 0 & 0 & \mu _3 & 3 q_3 \mu _3 & \cdots & 66 q_3^{10} \mu _3 & 0 & 0 & \nu _3 & 3 q_3 \nu _3 & \cdots & 153 q_3^{16} \nu _3 \\
 0 & 0 & \mu _4 & 3 q_4 \mu _4 & \cdots & 66 q_4^{10} \mu _4 & 0 & 0 & \nu _4 & 3 q_4 \nu _4 & \cdots & 153 q_4^{16} \nu _4 \\
 0 & 0 & 0 & 0 & \cdots & 0 & 0 & 0 & 0 & 0 & \cdots & 1 \\
 0 & 0 & 0 & 0 & \cdots & 0 & 1 & t_1 & t_1^2 & t_1^3 & \cdots & t_1^{18} \\
\vdots &  &  &  &  &  &  &  &  &  &  &  \\
 0 & 0 & 0 & 0 & \cdots & 0 & 1 & q_1 & q_1^2 & q_1^3 & \cdots & q_1^{18} \\
\vdots &  &  &  &  &  &  &  &  &  &  &  \\
 0 & 0 & 0 & 0 & \cdots & 0 & 0 & 1 & 2 q_1 & 3 q_1^2 & \cdots & 18 q_1^{17} \\
\vdots &  &  &  &  &  &  &  &  &  &  &  \\
 0 & 0 & 0 & 0 & \cdots & 0 & 0 & 0 & \omega _1 & \nu _1+3 q_1 \omega _1 & \cdots & 816 q_1^{15} \nu _1+153 q_1^{16} \omega _1 \\
\vdots &  &  &  &  &  &  &  &  &  &  &  \\
\end{array}
\right).
\end{displaymath}
\end{scriptsize}

\subsubsection{Searching for confluent Vandermonde matrices}
\label{sec:conf_Vand_search}
The determinant of $M_1$ cannot be computed directly even with computers. $M_1$ is almost a block diagonal matrix with two blocks. It is just almost block diagonal because the Equation~\eqref{eq:hqj-iqj} provides for the matrix four rows (the so-called common rows) which have nonzero element in both blocks. These quasi blocks are \emph{almost} confluent Vandermonde matrices in the sense that they have several rows which together form confluent Vandermonde matrices.

Our aim is to transform $M_1$ to some block lower/upper triangular matrices where the upper $13 \times 13$ and lower $19 \times 19$ blocks are confluent Vandermonde matrices and the remaining $13 \times 19$ or $19 \times 13$ blocks are zero matrices.

In the first step divide $M_1$ into two other matrices where the key object is the 13th row:
\begin{equation*}
	\begin{array}{ccccccccccccc}
	0 & 0 & \mu _1 & 3 q_1 \mu _1 & \cdots & 66 q_1^{10} \mu _1 & 0 & 0 & \nu _1 & 3 q_1 \nu _1 & \cdots & 153 q_1^{16} \nu _1.
	\end{array}
\end{equation*}
The first matrix (denoted by $M_1^1$) only differs from $M_1$ in that the 14th--32nd elements of the 13th row are zeros. The second matrix (say $M_1^2$) is the so-called auxiliary matrix, $M_1^2$ differs from $M_1$ in that the 1st--13th elements of the 13th row are zeros. Obviously $\det (M_1) = \det (M_1^1) + \det (M_1^2)$ and the matrix $M_1^1$ is a block lower triangular with the desired shape, since it has a $19 \times 13$ zero block. (Note, the proof that $M_1^1$ is confluent Vandermonde still remains.) $R_1$ and $S_1$ denote the diagonal blocks of $M_1^1$. 

In the second step interchange the 13th and the 14th rows in $M_1^2$. The determinant will be multiplied by $-1$. The resulting matrix (denoted by ${M_1^2}'$) has the shape of $M_1$ except some zero elements in the 14th row. (Note that the 13th row contains powers of $q_2$ and the 14th row contains powers of $q_1$.) Apply the first step to ${M_1^2}'$ and get two new matrices. One of them is a block lower triangular matrix with diagonal blocks $R_2$ and $S_2$. 

Continue the process (second step and first step) until the $13 \times 19$ or the $19 \times 13$ blocks are zero matrices. We need two more iterations and denote the diagonal blocks of the created matrices by $R_3$, $S_3$, $R_4$ and $S_4$. Finally, we get another matrix where the 1st--13th elements of 13th--32nd rows are zeros, thus the determinant of this matrix vanishes.

Hence the determinant of $M_1$ is the following:
\begin{equation}
\label{eq:detM1}
  \det(M_1) = \sum_{j=1}^4 \det(R_j) (-1)^{j-1} \det (S_j),
\end{equation}
where the $j$ also refers to $q_j$ which is contained in the 13th row in $R_j$ and to the $q_j$ which is not contained in the first three rows in $S_j$.

The last row of $R_j$ has a factor $\mu_j$ ($j=1,\dots,4$). Denote the matrix without this factor by $R'_j$ and notice that $R'_j$ is confluent Vandermonde apart from its first row.

The determinant of $S_j$ ($j=1,\dots, 4$) does not change if we add a scalar multiple of a row to another row. Based on Equations~\eqref{eq:hqj-iqj} and \eqref{eq:iqj_third} we can change the matrix $S_j$:
\begin{displaymath} 
  S_j[\tilde{k},.] := S_j[\tilde{k},.] - \frac{\omega_k}{\nu_k} S_j[k,.], \hspace{10mm} (k=1,\dots, 3), 
\end{displaymath}
where we abuse the notation and also denote the new matrix by $S_j$. The term $[k,.]$ denotes the $k$-th row of $S_j$. 
The index $\tilde{k} \in \{16,\dots, 19\}$ denotes the row which contains the same $q_l$ powers as the $k$-th row. For example, in $S_3$ we leave $k=1, 2, 3$ and $\tilde{k}=16,17,19$ respectively.

Divide $S_j$ to two other matrices along the $(15+j)$-th row. The first matrix contains the second derivative of $I(q_j)$, and if we factor $\omega_j \nu_1^2 \nu_2^2 \nu_3^2 \nu_4^2 / \nu_j^2$ from the rows, then the remaining matrix will be a confluent Vandermonde and will be denoted by $S'_j$. The second matrix contains the third derivative of $I(q_j)$. Factor $\nu_1^2 \nu_2^2 \nu_3^2 \nu_4^2 / \nu_j$ and denote the remaining matrix by $\hat{S}'_j$. The matrix $\hat{S}'_j$ is an almost Vandermonde, it misses the second derivative of $I(q_j)$. However, $\det (\hat{S}'_j)$ can be derived from $\det (S'_j)$ as follows:
\begin{equation}
  \label{eq:s_hat_det}
  \hat{s}_j := \det (\hat{S}'_j) = \frac{1}{3} \frac{\mathrm{d}}{\mathrm{d}q_j} \det (S'_j).
\end{equation}
Hence we showed that $M_1$ can be decomposed as some block lower triangular matrices with two confluent Vandermonde matrices in its diagonals.

\subsubsection{Matrices and their determinants}
\label{sec:M1_matr_det}
The parameters in the rows of $R'_1$ are $1$, $t_1$, $t_2$, $t_3$, $q_1^{(0)}$, $q_2^{(0)}$, $q_3^{(0)}$, $q_4^{(0)}$, $q_1^{(1)}$, $q_2^{(1)}$, $q_3^{(1)}$, $q_4^{(1)}$, $q_1^{(2)}$ respectively, where the upper index refers to the order of derivative. This sequence differs from the \emph{standard sequence} (see Definition~\ref{def:st_seq}).

\begin{defn}
\label{def:inversion}
	We introduce a \emph{total order} ($\prec$) on the set of $t_i$'s and $q_j^{(k)}$'s. We define the order such that the ascending order is just the standard sequence. 
	Let $L$ be a sequence with $t_i$'s and $q_j^{(k)}$'s and we label its elements by $\{1,2,3,\dots\}$. If $\alpha<\beta$ and $L(\alpha)\succ L(\beta)$ is called an \emph{inversion} of $L$.
\end{defn}
The number of inversions equals the number of necessary row interchanges which converts the sequence of parameters to the standard sequence. The row interchanges change the determinant by a sign hence it is necessary to calculate the parity of number of inversions.

The number of inversions is $12$ in the matrix $R'_1$, thus apply Equation~\eqref{eq:conf_vand} and get the determinant of $R'_1$:
\begin{equation}
\label{eq:r1}
  \begin{split}
    r_1:=&\det (R'_1) = \left(-q_1+q_2\right){}^6 \left(-q_1+q_3\right){}^6 \left(-q_2+q_3\right){}^4 \left(-q_1+q_4\right){}^6 \left(-q_2+q_4\right){}^4 \cdot \\
    & \cdot \left(-q_3+q_4\right){}^4 \left(q_1-t_1\right){}^3 \left(q_2-t_1\right){}^2 \left(q_3-t_1\right){}^2 \left(q_4-t_1\right){}^2 \left(q_1-t_2\right){}^3 \left(q_2-t_2\right){}^2 \cdot \\
    & \cdot \left(q_3-t_2\right){}^2 \left(q_4-t_2\right){}^2\left(-t_1+t_2\right) \left(q_1-t_3\right){}^3 \left(q_2-t_3\right){}^2 \left(q_3-t_3\right){}^2 \cdot \\ 
		& \cdot \left(q_4-t_3\right){}^2 \left(-t_1+t_3\right) \left(-t_2+t_3\right).
  \end{split}
\end{equation}

The determinants of the other $R'_j$'s (denoted by $r_j$, $j=2, 3, 4$) can be computed similarly but there is a simpler way if we change the indices. Change $q_j$ to $q_1$ and vice versa in Equation~\eqref{eq:r1}. We only need to compute how many row interchanges are needed to the conversion between $R'_j$ to $R'_1$. The numbers of inversions are even in all $R'_j$'s, thus the determinants $r_2$, $r_3$ and $r_4$ can be calculated with index change from $r_1$ only. We do not write these terms due to limited space. 

The parameters in $S'_1$ are: $q_2^{(2)}$, $q_3^{(2)}$, $q_4^{(2)}$, $1$, $t_1$, $t_2$, $t_3$, $q_1^{(0)}$, $q_2^{(0)}$, $q_3^{(0)}$, $q_4^{(0)}$, $q_1^{(1)}$, $q_2^{(1)}$, $q_3^{(1)}$, $q_4^{(1)}$, $q_1^{(2)}$, $q_2^{(3)}$, $q_3^{(3)}$, $q_4^{(3)}$. The number of inversions is $44$. Use Equation~\eqref{eq:conf_vand} again for the determinant of $S'_1$:
\begin{equation}
\label{eq:s1}
  \begin{split}
   s_1 =& \left(-q_1+q_2\right){}^{12} \left(-q_1+q_3\right){}^{12} \left(-q_2+q_3\right){}^{16} \left(-q_1+q_4\right){}^{12} \left(-q_2+q_4\right){}^{16} \cdot\\
       & \cdot \left(-q_3+q_4\right){}^{16} \left(q_1-t_1\right){}^3\left(q_2-t_1\right){}^4 \left(q_3-t_1\right){}^4 \left(q_4-t_1\right){}^4 \left(q_1-t_2\right){}^3 \left(q_2-t_2\right){}^4 \cdot\\
       & \cdot \left(q_3-t_2\right){}^4 \left(q_4-t_2\right){}^4\left(-t_1+t_2\right) \left(q_1-t_3\right){}^3 \left(q_2-t_3\right){}^4 \left(q_3-t_3\right){}^4 \left(q_4-t_3\right){}^4 \cdot \\
			& \cdot \left(-t_1+t_3\right) \left(-t_2+t_3\right).
  \end{split}
\end{equation}

For the determinants of other $S'_j$'s ($j=2, 3, 4$) use the similar method as for $R'_j$'s. Namely, interchange parameters $q_j$ and $q_1$. It is easy to compute the parity of the number of inversions: $(-1)^{3+(j-2)}$. Note that the sign $(-1)^{j-1}$ in Equation~\eqref{eq:detM1} cancels the previous sign, hence we can ignore these signs in the determinant of $M_1$. Get $s_j:=\det S'_j$ that is, interchange $q_1$ and $q_j$ in Equation~\eqref{eq:s1}. We do not write these terms due to limited space. 

Next, produce $\hat{s}_j$ with Equation~\eqref{eq:s_hat_det}.

Finally, sum up all $r_j$'s, $s_j$'s and $\hat{s}_j$'s, substitute $\mu_j$, $\nu_j$ and $\omega_j$ and get the determinant of $M_1$. 
\begin{equation*}
  \sigma_1  := \det (M_1) = \sum_{j=1}^4 \mu_j r_j \left(\frac{\omega_j \nu_1^2 \nu_2^2 \nu_3^2 \nu_4^2}{\nu_j^2} s_j + \frac{\nu_1^2 \nu_2^2 \nu_3^2 \nu_4^2}{\nu_j} \hat{s}_j \right).
\end{equation*}

After long simplification a symmetric polynomial $\chi_1$ can be factored out (namely $\sigma_1 = \chi_1 \phi_1$), where
\begin{equation*}
  \chi_1 = \prod_{1 \leq j_1 < j_2 \leq 4} (q_{j_1}-q_{j_2})^6 \prod_{1 \leq i_1 < i_2 \leq 3} (t_{i_1}-t_{i_2})^2.
\end{equation*}
The polynomial $\phi_1$ is also symmetric for permutations from $\Sym(4)$:
\begin{equation*}
  \begin{split}
    \phi_1 =& \sum_{j=1}^4 \bigg[ \prod_{\substack{ k_1, k_2 = 1 \\ k_1, k_2 \neq j,\; k_1 < k_2 }}^4 (q_{k_1} - q_{k_2})^2 \prod_{\substack{l = 1 \\ l \neq j}}^4 (q_j-q_l) \prod_{m=1}^3 (q_j-t_m) \bigg] \cdot \\
	 &\cdot \left[ p_j+\sum_{l=1,\, l \neq j}^4 \frac{1}{q_j-q_l}-2\sum_{m=1}^3 \frac{1}{q_j-t_m} \right],
  \end{split}
\end{equation*}
where the denominators of the fractions cancel with multiplier factors, hence $\phi_1$ is a polynomial.

The matrix $M_1$ has full rank if and only if its determinant does not vanish. The polynomial $\chi_1$ is not zero because $q_j \neq t_i$for any $i$ and $j$. In order to show that the polynomial $\phi_1$ does not vanish, it is enough to compute the coefficient of highest power in $q_1$. The highest power is $q_1^6$ and its coefficient is
\begin{displaymath} 
  p_1 \left(q_2-q_3\right)^2 \left(q_2-q_4\right)^2 \left(q_3-q_4\right)^2,
\end{displaymath}
what is nonzero if the $q_j$'s differ from each other. 

Hence we proved that $M_1$ has full rank in the generic case, moreover $\sigma_1$ specifies a discriminant variety in $T^{*}(\mathbb{C} \setminus P)^4$ (denoted by $V_1$). Consequently, the moduli space $\Mod$ of logarithmic connections of a rank-$3$ vector bundle over the Riemann sphere with $3$ parabolic points can be equipped with Darboux coordinates $q_j, p_j$ outside of $V_1$. This proves Theorem~\ref{th:main_V1} in case $n=3$. 

\subsection{Further discriminant varieties}
\label{sec:M2}
It is well known from the theory of system of linear equations that if the coefficient matrix does not have full rank but this rank equals the rank of augmented matrix then the system of equations is under-determined and the solution has some free parameters.
We want to reproduce this phenomenon related to the moduli space $\Mod$.

In this subsection we create varieties (denoted by $V_k$, $k=2,\dots,33$) which corresponds to the augmented matrix and we show a numerical example where $V_1\neq V_k$ and $V_1 \cap V_k\neq \emptyset$ for some index $k$.

Denote the right-hand vector of the system of linear equations~$(T)$ by $\mathbf{b}$:
\begin{footnotesize}
\begin{equation*}
\mathbf{b}^T = \left( \beta_0-\alpha_0+1, \frac{\beta_i-\alpha_i+1}{\lambda_i^2}, 0, 0, 0, 0, \frac{p_j}{\mu_j}, p_j^2 - \frac{\tilde{\mu}_j}{\mu_j} p_j, -\gamma_0, -\frac{\gamma_i}{\lambda_i^3}, 0, 0, 0, 0, 0, 0, 0, 0, 0, 0, 0, 0 \right)\!,
\end{equation*}
\end{footnotesize}
where $i=1, 2, 3$; $j=1, \dots, 4$, and introduce some notations:
\begin{alignat*}{3}
\alpha_i &= \rho_{i,1} + \rho_{i,2} + \rho_{i,3}, &\mbox{~where~} i=0, \dots, 3, \\
\beta_i &= \rho_{i,1} \rho_{i,2} + \rho_{i,1} \rho_{i,3} + \rho_{i,2} \rho_{i,3}, &\mbox{~where~} i=0, \dots, 3, \\
\gamma_i &= \rho_{i,1} \rho_{i,2} \rho_{i,3}, &\mbox{~where~} i=0, \dots, 3, \\
\lambda_i &=\frac{1}{\prod_{k=1,\; k\neq i}^3 (t_i-t_k) \prod_{l=1}^4 (t_i-q_l)}, &\mbox{~where~} i=1, \dots, 3,\\
\mu_j &=\frac{1}{\prod_{k=1}^3 (q_j-t_k)^2 \prod_{l=1,\; l\neq j}^4 (q_j-q_l)^2}, &\mbox{~where~} j=1, \dots, 4,\\
\tilde{\mu}_j &=\frac{\mathrm{d}\mu_j}{\mathrm{d}q_j}, &\mbox{~where~} j=1, \dots, 4.
\end{alignat*}
For the sake of completeness we express explicitly the parameters $\nu_j$, $\tilde{\nu}_j$ and $\omega_j$ which are introduced in Subsection~\ref{sec:app_sing} ($j=1, \dots, 4$).
\begin{align*}
	\nu_j & = \frac{1}{\prod_{k=1}^3 (q_j-t_k)^3 \prod_{l=1,\; l\neq j}^4 (q_j-q_l)^3} = \mu_j^{\frac{3}{2}}, \\
  \tilde{\nu}_j &= \frac{\mathrm{d}\nu_j}{\mathrm{d}q_j}, \\
	\omega_j & = p_j \nu_j + \tilde{\nu}_j. 
\end{align*}

Denote by $M_b$ that $32 \times 33$ matrix which has first column $\mathbf{b}$ and the others are the same as $M_1$. Denote by $M_k$ the $32 \times 32$ matrix which comes from $M_b$ by deleting its $k$-th column ($k=2,\dots,33$). Let $\sigma_k:=\det M_k$ and let $V_k$ be the variety in $T^{*}(\mathbb{C} \setminus P)^4$ which is defined by $\sigma_k$. (Note that the above definition gives back $M_1$ and $\sigma_1$ for $k=1$.)

It is clear that if one of $M_k$ ($k\geq 1$) has full rank, then the augmented matrix of System~$(T)$ has full rank. We proceed by method of contraposition. Namely, if the augmented matrix does not have full rank then $M_k$ for all $1\leq k \leq 33$ does not have full rank. Hence the subvariety of $\hat{V}:=\bigcap_{k=2}^{33} V_k$ describes the locus where the the moduli space does not admit unique coordinates. 
The moduli space $\Mod$ can be equipped coordinates on $V_k \setminus \hat{V}$, but it remains to understand what is happening in the subvariety $V_1 \cap \hat{V}$.

\subsubsection{Numerical example}
We do not compute varieties $V_k$ directly due to huge matrices and rational fractions. 
Thus we make a general numerical example which does not prove the Conjecture~\ref{cj:main_V2} but supports it. 
We choose random rational numbers for the parameters ($t_i, \alpha_i,\beta_i,\gamma_i$) and coordinates ($q_j, p_j$). Technically we choose rationals from the interval $[1/40,70]$ in order for the numbers in calculations be manageable easier. 

We can convince ourselves about $V_1 \neq V_k$ by substitution of random numbers to $\sigma_1$ and $\sigma_k$. 

\begin{itemize}
\item If $k=14$ or $k=33$ then $\sigma_{14}$ or $\sigma_{33}$ differ from $\sigma_1$ only a factor $(\beta_0 -\alpha_0 -1)$ or $(-\gamma_0)$. 
Thus $V_1=V_{14}=V_{33}$.
\item If $k=2$ or $k=15$ we projective transform the system. We already eliminated the parameter $t_0$ in the beginning of Section~\ref{sec:third_order}. Now, we can choose $t_1=0$, thus the $2$nd row in matrix $M_1$ becomes $(1, 0,\dots,0)$ and the $18$th row becomes $(0,\dots,0,1,0\dots,0)$ where the $1$ stands in the $14$th place.
There is a similar case than $k=14$ and $k=33$: the projective transformed $\sigma_2$ and $\sigma_{15}$ differ from $\sigma_1$ only by a factor 
$(\beta_1 -\alpha_1 -1)$ or $(-\gamma_1)$. Hence $V_1=V_{2}=V_{15}$.
\item In the remaining cases ($k\neq 2,14,15,33$) it turns out that $V_1 \neq V_k$.
\end{itemize}
\noindent In the rest of this section the index $k \in \left\{2,\dots,33\right\} \setminus \left\{2,14,15,33\right\}$.

First, we will find a point $P$ in $V_1 \cap \hat{V}$. Fix all parameters in $\sigma_1$ and $\sigma_k$ except for two of them. 
For the sake of simplicity let the two non-fixed parameters be $p_1$ and $p_2$ since $\deg_{p_j} \sigma_1 = 1$ and $\deg_{p_j} \sigma_k = 2$ for $j=1,\dots,4$. Let the fixed parameters be random rational numbers such that the factors $(q_{i_1}-t_{i_2})$, $(q_j-t_i)$ and $(q_{j_1}-q_{j_2})$ do not vanish. We note $\beta_j=0$ is a possible choice because $\alpha_j$ and $\beta_j$ appear only in the factor $(\alpha_j-\beta_j-1)$.

Substitute these numbers to $\sigma_1$ and $\sigma_k$, and solve the equation $\sigma_1=0$ for the variable $p_1$. Substitute the solution to the equation $\sigma_k=0$ and solve the resulting quadric equation for the variable $p_2$. 

\noindent The random numbers specify some points in $V_1 \cap V_k$ for all $k$. It turns out there exist three points in $V_1 \cap \hat{V}$ under the parameters fixed above. One is chosen as $P$. Geometrically, this means that the $p_1-p_2$ coordinate plane intersects $V_1 \cap \hat{V}$.
In the conjecture we suppose that the point $P$ is a smooth point in the varieties $V_1$ and $V_k$.

\subsection{Computation of a minor}
\label{sec:minor}
In this subsection we compute an open subset of $V_1 \cap \hat{V}$ with codimension $2$ (denoted by $(V_1 \cap \hat{V})^0$) and we want to specify the value of corank of $M_1$ on $(V_1 \cap \hat{V})^0$. If the rank decreases by $1$, then there exists a solution of the system of linear equations of coefficients of Equation~\eqref{eq:diffeq} on the subset $(V_1 \cap \hat{V})^0$. Examine the corank with a variety $W$ which defined by determinant of a certain minor of $M_1$. If there is a minor which does not vanish on an open subset of $(V_1 \cap \hat{V})$, then the rank of the system decreases by $1$ on the subset $(V_1 \cap \hat{V})\setminus W$. Moreover, this means: $\codim \left( V_1 \cap \hat{V} \cap W \right) \geq 3$.

We have the freedom to choose a minor, the appropriate choice is an easy computable one. If all varieties, defined by these minors, contain an everywhere dense subset of $V_1 \cap \hat{V}$, then the rank of $M_1$ decreases by at least $2$ in $V_1 \cap \hat{V}$.

Choose the minor associated to $M_1 [13, 13]$ and denote it by $M_f$. Compute $\det M_f$ with the method used so far. The minor has two diagonal blocks, denote the upper left by $R_f$ and the lower right by $S_f$. The upper right block is the zero matrix thus $\sigma_f := \det M_f=\det R_f \cdot \det S_f$.
\begin{equation*}
	\det R_f=\mathrm{ConfVand}\left(t_1^{(0)},t_2^{(0)},t_3^{(0)},q_1^{(1)},q_2^{(1)},q_3^{(1)},q_4^{(1)}\right).
\end{equation*}
The associated lower right block is $S_1$, hence:
\begin{equation*}
\det S_f = s_1 \omega_1 \nu_2^2 \nu_3^2 \nu_4^2 + \hat{s}_1 \nu_1 \nu_2^2 \nu_3^2 \nu_4^2.
\end{equation*}

The minor $\sigma_f$ defines a variety $W$ with $\codim W =1$.
The polynomial $\sigma_f$ factors as $\chi_f\phi_f$ and the two polynomials are:
\begin{align*}
\chi_f =& \prod_{1 \leq i_1 < i_2 \leq 3} (t_{i_1}-t_{i_2})^2 \prod_{i=1}^3 (t_i-q_1)^6  \prod_{j=2}^4 (q_1-q_j)^6 \prod_{\substack{k, l=2 \\ k<l}}^4 (q_{k}-q_{l})^8, \\
\begin{split}
\phi_f =& 2 \prod_{j=2}^4 (q_1-q_j) \left[ \sum_{\substack{k, l=1 \\ k<l}}^3 (q_1-t_{k})(q_1-t_{l}) \right] - \\
 &- \prod_{i=1}^3 (q_1-t_i) \left[ \sum_{\substack{k, l=2 \\ k<l}}^4 (q_1-q_k)(q_1-q_l) + p_1 (q_1-q_2) (q_1-q_3) (q_1-q_4) \right].
\end{split}
\end{align*}

The coefficient of the highest power in $q_1$ of $\phi_f$ is $-p_1$, hence $\phi_f \neq 0$. Moreover, $\sigma_f$ is generically irreducible.

Finally, substitute the computed point $P$ from Subsection~\ref{sec:M2} to $\sigma_f$. We get a nonzero number, hence the rank of $M_1$ decreases by $1$ at the point $P$. This is also true in an open neighborhood of $P$ due to conjecture that the varieties are smooth at $P$. Hence there exists an open subset of $V_1 \cap \hat{V}$, where the rank of $ M_1$ decreases by exactly $1$:
\begin{equation*}
(V_1 \cap \hat{V})^0 := (V_1 \cap \hat{V}) \setminus W \setminus \Delta.
\end{equation*}

\subsection{Blowing up along \texorpdfstring{$(V_1 \cap \hat{V})^0$}{(V\textoneinferior\ intersection V)\textzerosuperior}}
The (irreducible) varieties $V_1$ and $\hat{V}$ specify a $1$-dimensional linear system $D_{\lambda,k}=(\sigma_1 + \lambda \sigma_k =0)$ in $\Mod$ which defines a base locus 
\begin{equation*}
	\bigcap_{\substack{\lambda \in \mathbb{CP}^1 \\ k=2}}^33 D_{\lambda,k}. 
\end{equation*}
It is just the variety $V_1 \cap \hat{V}$.

We want to blow up along the base locus, but $D_{\lambda,k}$ is a pencil, hence we can fix only one parameter in the system of the linear equations $(T)$. Consequently, we do not get a unique solution where the rank of the system decreases by more than $1$.

The variety $V_1 \cap \hat{V} \cap W \setminus \Delta$ defines a multi-dimensional linear system but we present the blow up procedure along a $1$-dimensional linear system only, hence we blow up along $(V_1 \cap \hat{V})^0$.

Let $\Omega \subset \Mod$ be a $2N=8$ dimensional ball with holomorphic coordinates $q_1, \dots, q_4, p_1, \dots, p_4$. The base locus is defined by equations~$\sigma_1=\sigma_k=0$. (Again $k \in \left\{2,\dots,33\right\} \setminus \left\{2,14,15,33\right\}$.)  Denote the coordinates in the base locus the following way: $Q_1=\sigma_1$, $P_1=\sigma_k$, $z_3=q_2$, $z_4=q_3$, $z_5=q_4$, $z_6=p_2$, $z_7=p_3$, $z_8=p_4$.

Let $[l_1,l_2]$ be homogeneous coordinates in $\mathbb{CP}^1$. Our aim is to determine these coordinates. Denote the blow up by $\overset{\sim}{\Omega}$:
\begin{equation*}
	\overset{\sim}{\Omega} = \left\{ \left( (Q_1, P_1, z_3, \dots, z_8),\left[ l_1,l_2 \right] \right): Q_1 l_2 = P_1 l_1 \right\}.
\end{equation*}
The projection to the first coordinate is $\pi: \overset{\sim}{\Omega} \rightarrow \Omega$ and the exceptional divisor is 
$E=\pi^{-1}\left( (V_1 \cap \hat{V})^0\right)$.

Introduce two local charts $U_1$ and $U_2$ on $\overset{\sim}{\Omega}$. The charts $U_1$ and $U_2$ are given by $l_1\neq 0$ and $l_2 \neq 0$ respectively. The coordinates on chart $U_1$ are
\begin{alignat*}{3}
	x_1 &= Q_1, \mbox{\hspace{1cm}} &\\
	x_2 &= \frac{l_2}{l_1}=\frac{P_1}{Q_1}, \mbox{\hspace{1cm}} &\\
	x_i &= z_i, \mbox{\hspace{1cm}} & i=3, \dots, 8.
\end{alignat*}
The coordinates on chart $U_2$ are
\begin{alignat*}{3}
	y_1 &= \frac{l_1}{l_2}=\frac{Q_1}{P_1}, \mbox{\hspace{1cm}} &\\
	y_2 &= P_1, \mbox{\hspace{1cm}} &\\
	y_i &= z_i, \mbox{\hspace{1cm}} & i=3, \dots, 8.
\end{alignat*}
Obviously, the coordinate systems on the charts $U_1$ and $U_2$ are compatible and the transition functions are easy to work out.

Note that, the above ratio $\frac{P_1}{Q_1}$ determines the solution of our System~$(T)$ in the variable $H_{k-2}$ or $I_{k-15}$ due to Cramer's rule as long as $Q_1 \neq 0$ (i. e. outside the variety $V_1$): 
\begin{align*}
	H_{k-2} &= \frac{\det M_k}{\det M_1} = \frac{\sigma_k}{\sigma_1}=\frac{P_1}{Q_1}, \mbox{~if~} k=3, \dots,13; \\
	I_{k-15} &= \frac{\det M_k}{\det M_1} = \frac{\sigma_k}{\sigma_1}=\frac{P_1}{Q_1}, \mbox{~if~} k=16, \dots,32.
\end{align*}

However, the ratio $\frac{P_1}{Q_1}$ is undefined in $(V_1 \cap \hat{V})^0$. 
In other words, $H_{k-2}$ or $I_{k-15}$ is not determined by the coordinates $\left(Q_1=0, P_1=0, z_3, \dots, z_8\right)$, 
hence these will be the free parameter in the solution of System~$(T)$. 
If we fix the value of the free parameter on the exceptional divisor $E$, 
then we have a solution on $(V_1 \cap \hat{V})^0$. The coordinates on the charts $U_1$ and $U_2$ are:
\begin{align*}
	x_2 &= \frac{l_2}{l_1} = \frac{H_{k-2}}{1}, \\
	y_1 &= \frac{l_1}{l_2} = \frac{1}{H_{k-2}},
\end{align*}
if k=3, \dots,13. The case $k=16, \dots,32$ is similarly computable.

The coordinate $y_1$ is not equal zero in the chart $U_2$ due to $H_{k-2},I_{k-15} \neq \infty$, because $H_{k-2}$ and $I_{k-15}$ is a solution of the system of linear equations. 
The rank of the augmented matrix is greater than the rank of the coefficient matrix in the set $V_1 \setminus \hat{V}$. 
Consequently, we need to subtract the proper transform of $V_1$ from the blowing up. If the varieties $V_1$ and $\hat{V}$ are generically transverse, then the proper transform of $V_1$ will be the following:
\begin{equation*}
	\overset{\sim}{V_1} =\left\{ \left( \left( 0, P_1, q_2,\dots, p_4 \right), \left[ 0, 1 \right] \right) \in \Mod \times \mathbb{CP}^1 \right\}.
\end{equation*}

We have made numerical example for Conjecture~\ref{cj:main_V2} and made a blow up on $\left(T^{*} \left( \mathbb{C}\setminus P \right)^4 \setminus \Delta \right)/\Sym(4)$ along $(V_1 \cap \hat{V})^0$. We have gotten a family of solutions of systems of linear equations for coefficients of Equation~\eqref{eq:diffeq} parametrized by $H_{k-2}$ or $I_{k-15}$ in $(V_1 \cap \hat{V})^0$. 
Finally, we can associate a logarithmic connection to a fixed value $x_2 = \frac{l_2}{l_1} = \frac{H_{k-2}}{1}$ over the point $(0, 0, z_3, \dots, z_8) \in\Omega$. Hence the resulting set $\overset{\sim}{\Omega}\setminus \overset{\sim}{V_1}$ will be the part of the moduli space $\Mod$.

\section{The third order system with arbitrary number of parabolic points}
\label{sec:m=3}
\subsection{The existence of the variety}
We return to the description of the general case where we interrupted in Section~\ref{sec:third_order}. Hence we fix $n+1$ parabolic points on the Riemann surface $\Sigma$ and we fix exponents $\rho_{i,k}$ $i=0,\dots,n$, $k=1,2,3$, we have $N=3n-5$ apparent singularities and the same number of free parameters. In this section we will prove Theorem~\ref{th:main_V1} with the generalization of the method in Subsection~\ref{sec:M1}. We will not compute the discriminant variety exactly, we will only prove its existence. 
The aim is to show that the coefficient which belongs to the highest order exponent of $q_1$ in the polynomial associated to the variety is not zero.

As we saw in Proposition~\ref{prop:G_matrix}, the first $4n-4$ rows of the coefficient matrix are independent of the other rows. Denote the coefficient matrix of System~$(T)$ in the case of general $n$ by $M$.

Compute the determinant of $M$ with splits and row interchanges along the $3n-5$ common rows. Make confluent Vandermonde matrices as in Subsection~\ref{sec:M1}. Generalize the notation for $R_j$ and $S_j$ the following way: $J$ will be a multi-index which contains an index $j$ if $q_j$ appears in the last $n-2$ rows in the upper left $(8n-11) \times (8n-11)$ block. Hence $R_J$ is an $(8n-11) \times (8n-11)$ matrix where the last $n-2$ rows contain parameters $q_{j_1}, q_{j_2}, \dots, q_{j_{n-2}}$, where $j_1 < j_2 < \dots <j_{n-2}$ and $j_l \in J$ for all $l=1, \dots, n-2$. Similarly, $J$ in $S_J$ refers to the set $\overline{J}:=\{1, 2, \dots, 3n-5 \} \setminus J$. 
Through out this section we will use letter $j$ for index from $J$ and letter $\jj$ for index from $\overline{J}$, furthermore $i=1,\dots,n$.
 
Two splittings of $M$ are different if their indices $J_1$ and $J_2$ are not the same. On the other hand, all possible multi-indices $J$ appear in the expansion of the determinant of $M$. This shows that the expansion has $\binom{3n-5}{n-2}$ pairs of $(R_J,S_J)$. One of these pairs is the diagonal block of a matrix which has a zero block in the upper right or the lower left corner.

It is enough to consider the pairs which contribute to the coefficient of the highest exponent of $q_1$.

\subsection{The highest exponent}
\label{sec:highest_q1_exp}
First, we want to decide when we get higher $q_1$ power: if the common row with $q_1$ is in $R_J$ or is in $S_J$. 
The quantity and the order of parameters in $R_J$: $n$ type $t_i^{(0)}$, $2n-3$ type $q_k^{(1)}$ and $n-2$ type $q_j^{(2)}$. 
The quantity and the order of parameters in $S_J$: $n$ type $t_i^{(0)}$, $n-2$ type $q_j^{(2)}$ and $2n-3$ type $q_k^{(3)}$. 

The first $2n-3$ rows of the matrices $S_J$ come from Equation~\eqref{eq:hqj-iqj} and last $3n-5$ rows come from Equation~\eqref{eq:iqj_third}. The determinant does not change if we eliminate the coefficients related to the second derivatives of the $I(q_k)$'s from $2n-3$ rows in the last $3n-5$ rows. 
The remaining $n-2$ rows from the last $3n-5$ rows contain $q_j$'s with $j \in J$. Split $S_J$ along these $n-2$ rows such that one of them contains only the coefficients related to the second derivatives of the $I(q_j)$'s (denoted by $S'_J$), others contain one more coefficient related to the third derivative of $I(q_j)$ thus $\deg_{(q_1-q_k)} (S'_J)$ is greater than degree of any other matrix which comes from the splitting of $S_J$.

We have to count the degree of $q_1$ in the factors $\mu_1$, $\nu_1$, $\omega_1$, $\mu_l$, $\nu_l$ and $\omega_l$ (see Equations~\eqref{eq:mu-nu} and \eqref{eq:omega}): $\deg_{q_1}(\mu_1) = -2 n - 2(3 n - 6)$, $\deg_{q_1}(\mu_{l}) = -2$, $\deg_{q_1}(\nu_1) =\deg_{q_1}(\omega_1) = -3 n - 3(3 n - 6)$ and $\deg_{q_1}(\nu_{l}) =\deg_{q_1}(\omega_{l}) = -3$ where $l=2,\dots,3n-5$.

Now, compute the exponent of $q_1$ in the matrix $M$ with Formula~\eqref{eq:conf_vand}. The next computations build on the following (everywhere $j \in J\setminus\{1\}$, $\jj \in \comp{J}\setminus\{1\}$): the first $3$~terms count the exponents of $(q_1-q_j)$, $(q_1-q_{\jj})$ and $(q_1-t_i)$ in $R_J$; the fourth term counts the exponents of $\mu_1$ and $\mu_j$'s; the 5th--7th terms count $(q_1-q_j)$, $(q_1-q_{\jj})$ and $(q_1-t_i)$ in $S_J$; the 8th term counts $\nu_{\jj}$'s; the 9th term counts $\omega_j$'s; the 10th term counts $\omega_1$ in the first sum or two $\nu_1$'s in the second sum.

\noindent If the common row with $q_1$ is in $R_J$:
\begin{align*}
(3\cdot3)(n - 3) + (3\cdot2)(2 n - 3) + (3\cdot1) n - [2 n + 2(3 n - 6) + 2(n - 3)]+&\\ 
+(3\cdot4)(2 n - 3) + (3\cdot3) (n - 3) + (3\cdot1) n - 3 \cdot2(2 n - 3) - 3 (n-3) -&\\
- [3 n + 3(3 n - 6)] =&\\
=& 23 n - 45.
\end{align*}
\noindent If the common row with $q_1$ is in $S'_J$:
\begin{align*}
(2\cdot3)(n - 2) + (2\cdot2)(2 n - 4) + (2\cdot1) n - 2(n - 2) +(4\cdot4)(2 n - 4)+&\\
 + (4\cdot3) (n - 2) + (4\cdot1)n - 3\cdot 2(2 n - 4) - 3(n - 2) -& \\
 - 2\cdot[3 n + 3(3 n - 6)]=&\\
=& 23 n - 46.
\end{align*}
Hence the contribution of the common row with $q_1$ must be counted in the matrix $R_J$, i. e. $1 \in J$.

\subsection{The coefficient of the highest exponent}
The number of matrices which contribute to the coefficient of the highest $q_1$ exponent is the same as the number of sets $J$ with condition $1 \in J$, namely this cardinality is $\binom{3n-6}{n-3}$.

First, we compute the determinant of a matrix $R_J$ where $J=1, \dots, n-2$ (denote this set by $J_1$), i. e. the last $n-2$ rows of $R_J$ contain powers of $q_1, q_2, \dots, q_{n-2}$ respectively.
Matrix $R_{J_1}$ is a confluent Vandermonde matrix except for the factors $\mu_j$ ($j\in J_1$). 
Denote the number of inversions of parameters in $R_{J_1}$ by $\iota_r$ (see Definition~\ref{def:inversion}).
We do not compute $\iota_r$, because this would give a sign only.
\begin{align*}
	\det R_{J_1} &= r_{J_1} \cdot \prod_{j\in J_1} \mu_j := \\
	& = (-1)^{\iota_r} \mathrm{ConfVand}\left(t_1^{(0)},\dots,t_n^{(0)}, q_1^{(2)},\dots,q_{n-2}^{(2)}, q_{n-1}^{(1)},\dots,q_{3n-5}^{(1)}\right) \prod_{j\in J_1} \mu_j.
\end{align*}

The pair of $R_{J_1}$ is $S'_{J_1}$ which is a confluent Vandermonde, also except some factors $\nu_{\jj}$ and $\omega_j$. Denote the number of inversions by $\iota_s$.
\begin{align*}
	\det S'_{J_1} =& s_{J_1} \cdot \prod_{j\in J_1} (p_j \nu_j+\tilde{\nu}_j) \prod_{\jj\in\overline{J_1}} \nu_{\jj}^2 := \\
	=& (-1)^{\iota_s} \mathrm{ConfVand}\left(t_1^{(0)},\dots,t_n^{(0)}, q_1^{(2)},\dots,q_{n-2}^{(2)}, q_{n-1}^{(3)},\dots,q_{3n-5}^{(3)}\right) \cdot \\
	& \cdot \prod_{j\in J_1} (p_j \nu_j+\tilde{\nu}_j) \prod_{\jj\in\overline{J_1}} \nu_{\jj}^2.
\end{align*}

Produce the other terms of the expansion of the determinant of $M$ with index interchanges, as in Subsection~\ref{sec:M1}. Change elements between the set $J_1$ and the set $\comp{J_1}$. There is an important property: the index interchange does not change the parity of the number of inversions. 
Namely, if we adjust the sequence of indices $(J, \comp{J})$ to the standard sequence in each $3n-5$ row which contain $q_l$ ($l=1,\dots,3n-5$), then we change six times each. The upshot is that the number of row interchanges is even. 

All in all, the highest $q_1$ exponent is $23n-45$ due to the computation in Subsection~\ref{sec:highest_q1_exp} and the product which contains the highest exponent is the following:
\begin{equation}
\label{eq:q1_exponent}
\begin{split}
	&\sum_{\substack{J=\left\{j_1, \dots, j_{n-2} \right\} \\ 1 = j_1 < j_2 < \dots <j_{n-2} \leq 3n-5}} r_J s_J \prod_{j\in J} \mu_j (p_j \nu_j+\tilde{\nu}_j) \prod_{\jj\in\overline{J}} \nu_{\jj}^2 = \\
	=&\sum_{J} r_J s_J \prod_{j\in J} \mu_j \nu_j p_j \prod_{\jj\in\overline{J}} \nu_{\jj}^2 + \sum_{J} r_J s_J \prod_{j\in J} \mu_j \tilde{\nu}_j \prod_{\jj\in\overline{J}} \nu_{\jj}^2.
\end{split}
\end{equation}
If $j\neq 1$ then $\deg_{q_1} (\nu_j)=9-3n > \deg_{q_1} (\tilde{\nu}_j)=8-3n$ and if $j=1$ then $\deg_{q_1} (\nu_j)=18-12n > \deg_{q_1} (\tilde{\nu}_j)=17-12n$ according to Equation~\eqref{eq:mu-nu}. Hence the coefficient of the highest $q_1$ exponent contains the factor $\prod_{j\in J} p_j$. Each polynomials in the first sum in the second row of the Equation~\eqref{eq:q1_exponent} have different $\prod_{j\in J} p_j$ factors, hence the first sum does not vanish.

\begin{proof}[Proof of Theorem~\ref{th:main_V1}]
The above discussion shows that there exists a nonzero $\Sym(N)$ invariant polynomial with coordinates $\{q_j, p_j\}_{j=1}^{j=N}$. The polynomial defines an affine subvariety $V$ in \mbox{$T^{*}(\mathbb{C} \setminus P)^N$} and a part of \mbox{$T^{*}(\mathbb{C} \setminus P)^N$} outside $V$ is a dense open set of the moduli space $\Mod$.
\end{proof}

\bibliography{log_bib_Ivanics}
\bibliographystyle{plain}
\end{document}